\theoremstyle{plain}
\newtheorem{theorem}{Theorem}[section]
\newtheorem{lemma}[theorem]{Lemma}
\newtheorem{corollary}[theorem]{Corollary}
\newtheorem{claim}[theorem]{Claim}
\newtheorem{question}[theorem]{Question}
\newtheorem*{claim*}{Claim}
\newtheorem*{theorem*}{Theorem}
\newtheorem*{lemma*}{Lemma}
\newtheorem*{proposition*}{Proposition}
\newtheorem*{corollary*}{Corollary}
\newtheorem*{subclaim*}{Subclaim}
\newtheorem{convention}[theorem]{Notation}
\theoremstyle{definition}
\newtheorem{definition}[theorem]{Definition}
\newtheorem{fact}[theorem]{Fact}
\newtheorem*{definition*}{Definition}
\newtheorem*{example*}{Example}
\newtheorem*{fact*}{Fact} 
\theoremstyle{remark}
\newtheorem{remark}[theorem]{Remark}
\newenvironment{teqpar}[1][]{\begin{equation}\tag*{#1}\begin{minipage}{0.8\columnwidth}}%
{\end{minipage}\end{equation}}
\newenvironment{eqpar}{\begin{equation}\begin{minipage}{0.8\columnwidth}}%
{\end{minipage}\end{equation}}
\newenvironment{eqpar*}{\begin{equation*}\begin{minipage}{0.8\columnwidth}}%
{\end{minipage}\end{equation*}}
\newcommand{\BFA}{\mathsf{BFA}}
\newcommand{\BPFA}{\mathsf{BPFA}}
\newcommand{\BAAFA}{\mathsf{BA\mspace{-5.7mu}AFA}}
\newcommand{\MA}{\mathsf{MA}}
\DeclareMathOperator{\code}{\mathsf{cd}}
\DeclareMathOperator{\decode}{\mathsf{dc}}
\DeclareMathOperator{\powerset}{\mathcal{P}}
\newcommand{\ON}{\mathrm{On}}
\DeclareMathOperator{\nat}{\mathbb{N}}
\DeclareMathOperator{\PP}{\mathbb{P}}
\providecommand{\int}{\mathbb{Z}}
\providecommand{\reals}{\mathbb{R}}
\newcommand{\wlg}{w.l.o.g.}
\newcommand{\la}{\langle}
\newcommand{\ra}{\rangle}
\DeclareMathOperator{\ran}{ran}
\DeclareMathOperator{\seqop}{Seq}
\newcommand{\seqdc}[1]{\seqop( #1 )}
\providecommand{\res}{\mathbin{\upharpoonright} }
\newcommand{\ZFminus}{\ensuremath{\mathrm{ZF}^-}}
\providecommand{\ZFC}{\mathsf{ZFC}}
\providecommand{\ZF}{\textup{ZF}}
\providecommand{\Hhier}{\mathbf{H}}
\DeclareMathOperator{\eL}{\mathbf{L}}
\DeclareMathOperator{\Ve}{\mathbf{V}}
\providecommand{\setdef}{\;|\;}
\newcommand{\opair}[2]{
\mathord \langle #1, #2\rangle
}
\newcommand{\seq}[2]{\langle #1 | #2\rangle}
\newcommand{\soast}[2]{\{ #1 | #2\}}
\newcommand{\believes}[2]{#1 \vDash ``#2"}
\newcommand{\ifff}{\text{ if and only if }}
\DeclareMathOperator{\comp}{\parallel}
\newcommand{\dfeq}{:=}
\DeclareMathOperator{\On}{On}
\providecommand{\card}[1]{\lvert#1\rvert}
\providecommand{\ccc}{\textup{ccc}}
\providecommand{\Coll}{\textup{Coll}}
\author[Fischer]{Vera Fischer}
\address{Kurt Gödel Research Centre, Institut für Mathematik, UZA 1, Universität Wien. Augasse 2--6, 1090 Vienna, Austria}
\email{vera.fischer@univie.ac.at}
\author[Schrittesser]{David Schrittesser}
\address{Kurt Gödel Research Centre, Institut für Mathematik, UZA 1, Universität Wien. Augasse 2--6, 1090 Vienna, Austria}
\email{david@logic.univie.ac.at}
\author[Weinert]{Thilo Weinert}
\address{Kurt Gödel Research Centre, Institut für Mathematik, UZA 1, Universität Wien. Augasse 2--6, 1090 Vienna, Austria}
\email{thilo.weinert@univie.ac.at}
\title[Definable mad families and forcing axioms]{Definable mad families and forcing axioms}
\subjclass[2010]{}
\keywords{Maximal almost disjoint families, MAD families, Bounded proper forcing axiom, remarkable cardinals}
\begin{document}

\newcommand{\linspan}[1]{\hull( #1 )}

\begin{abstract}
 We show that under the Bounded Proper Forcing Axiom and an anti-large cardinal assumption, 
 there is a $\mathbf{\Pi}^1_2$ MAD family. 
\end{abstract}

\maketitle

\section{Introduction}

\paragraph{A}
In his famous article \cite{miller} A.\ Miller constucted several co-analytic infinite combinatorial objects, working under the Axiom of Constructibility, i.e., assuming $\Ve=\eL$.
One of the objects he constructed was a co-analytic MAD family.

\medskip

A MAD family is a collection $\mathcal A$ of infinite subsets of $\omega$ with the following two properties: 
Firstly, $\mathcal A$ is an almost disjoint (short: a.d.) family, that is, any two distince $a,a' \in \mathcal A$ are almost disjoint, i.e., $a \cap a'$ is finite; and moreover, for any infinite set $b\subseteq \omega$ there is $a\in\mathcal A$.
Secondly, $\mathcal A$ is \emph{maximal} among a.d. families under inclusion. 

As is well-known, Mathias proved that no infinite MAD family can be analytic. Miller's result showed that this is optimal, since it shows it to be consistent with $\ZFC$ that there is a co-analytic infinite MAD family.

\medskip

%It is thus also clear that the following questions are not settled by $\ZFC$: Does there exist an infinite MAD family which is \emph{definable}? If yes, how complex does such a definition have to be?

The definability of MAD families has been investigated under many natural extensions of the axiomatic system $\ZFC$. 
For example, as was shown relatively recently, if the Axiom of Determinacy holds in $\eL(\reals)$, no infinite  MAD family can be an element of $\eL(\reals)$; and
under the Axiom of Projective determinacy, there is no projective infinite MAD family \cite{neeman-norwood,karen,pnas}.

Another natural family of extensions of $\ZFC$ are \emph{forcing axioms}.
It is clear from the above that forcing axioms essentially rule out the existence of definable infinite MAD families, provided such an axiom is strong enough to imply that the Axiom of Determinacy holds in $\eL(\reals)$.

\medskip

On the other hand, as we show in this article, under an \emph{anti-large cardinal assumption}, forcing axioms can lead to the opposite result: They imply the existence of projective infinite MAD families. Denote by $\BPFA$ the Bounded Proper Forcing Axiom.
\begin{theorem}\label{t.main}
Suppose $\BPFA$ holds and that $\omega_1$ is not remarkable in $\eL$.
Then there is a $\mathbf{\Pi}^1_2$ MAD family.
\end{theorem}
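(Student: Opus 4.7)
The plan is to build the $\mathbf{\Pi}^1_2$ MAD family by transfinite recursion of length $\omega_1$, diagonalising against a canonical enumeration of potential ``thieves'' (infinite subsets of $\omega$ witnessing non-maximality), and to keep the complexity low by exploiting Bagaria's characterisation of $\BPFA$ as a $\Sigma_1$-reflection principle for $H_{\omega_2}$ together with the anti-large cardinal assumption. In spirit this adapts Miller's $\eL$-construction, with the standard $\Sigma^1_2$ well-ordering of $\eL$ replaced by a coding that survives the move from $\eL$ to a model of $\BPFA$.

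The first step is a coding lemma. Since $\omega_1$ is not remarkable in $\eL$, a Schindler-style analysis of the $\eL$-hierarchy up to $\omega_1$ should produce a real parameter $r_0$ whose presence causes $\omega_1$ to fail to be inaccessible to reals in a sufficiently constructive way, yielding a $\mathbf{\Sigma}^1_2$-good enumeration of a rich definable class of reals together with a canonical correspondence between countable $\Sigma_1(H_{\omega_2})$-witnesses and reals. Combined with $\BPFA$, this lets one reflect the existence of any $\Sigma_1(H_{\omega_2})$-witness down into a countable transitive object coded by a real.

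The second step is the diagonalisation proper. Along the enumeration from Step~1, at stage $\alpha<\omega_1$ one looks at the $\alpha$-th candidate thief $b_\alpha$ and, if nothing in the partial family $\mathcal A_\alpha$ already catches it, adds the least (in the enumeration) infinite $a\subseteq\omega$ almost disjoint from every member of $\mathcal A_\alpha$ and meeting $b_\alpha$ infinitely; setting $\mathcal A=\bigcup_{\alpha<\omega_1}\mathcal A_\alpha$ gives a MAD family by construction.

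The third and hardest step is the complexity calculation. The intended definition is: $a\in\mathcal A$ if and only if for every countable transitive model $M$ containing $r_0$ and coding enough of the construction, $a$ appears in the MAD family as computed inside $M$. Once the coding lemma ensures the stagewise construction is arithmetic in $r_0$ inside such models, the universal quantifier over $M$ turns this into a $\mathbf{\Pi}^1_2$ statement. The chief obstacle is pushing the complexity down to $\mathbf{\Pi}^1_2$ rather than the naive $\mathbf{\Sigma}^1_3$: a direct recursion carries an existential quantifier over ordinals below $\omega_1$, and it is precisely the failure of remarkability of $\omega_1$ in $\eL$ that lets us absorb this into a universal quantifier over countable models, by guaranteeing that a canonical witnessing object exists at each stage and is itself located at a projectively bounded level of the $\eL$-hierarchy. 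Making both the enumeration of thieves and the recipe at each stage canonical enough to carry this absorption through is, I expect, the technical heart of the argument.
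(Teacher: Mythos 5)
There is a genuine gap, and it occurs at the very first step: your recursion has length $\omega_1$, but under $\BPFA$ the continuum is $\aleph_2$ and $\MA_{\aleph_1}$ implies that every infinite MAD family has size at least $\aleph_2$. A family $\mathcal A=\bigcup_{\alpha<\omega_1}\mathcal A_\alpha$ obtained by diagonalising along an $\omega_1$-enumeration meets only $\aleph_1$ many candidate thieves and so cannot be maximal; indeed the ``rich definable class of reals'' you propose to enumerate via the $\eL$-hierarchy relative to $r_0$ is essentially $\powerset(\omega)\cap\eL[r_0]$, a set of size $\aleph_1<2^{\aleph_0}$. The construction has to run for $\omega_2$ stages, and the ingredient you are missing is the Caicedo--Velickovic theorem: $\BPFA$ yields a well-ordering $\prec$ of $\powerset(\omega)$ of order type $\omega_2$ whose initial segments are $\Sigma_1$-definable over $\Hhier(\omega_2)$ from a parameter in $\powerset(\omega_1)$. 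It is this well-ordering, not one coming from $\eL[r_0]$, that supplies both the enumeration of thieves and the canonical ($\prec$-least) choices at each stage in a form that survives reflection into countable models.

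Your remaining ingredients do match the paper's strategy: Schindler's theorem converts non-remarkability plus $\BPFA$ into a real $r$ with $\omega_1=(\omega_1)^{\eL[r]}$, and this, together with $\MA_{\aleph_1}$ and Solovay almost disjoint coding along a canonical a.d.\ family in $\eL[r]$, localises $\Sigma_1(\Hhier(\omega_2))$ facts into countable transitive models, producing exactly the $\Pi^1_2$ shape ``for every suitable countable $N$ containing the relevant reals, $N$ verifies~$\dots$''. But the part you defer as ``the technical heart'' is where the real work lies even after the length is corrected: a single canonical witnessing real per stage does not close the induction, because the statement ``$c$ is the $\prec$-least localizer'' itself needs a further localizer to be recognisable inside countable models. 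The paper handles this with an infinite tower (a \emph{minimal local witness} is an $\omega$-sequence of reals, each coordinate the $\prec$-least localizer for the preceding finite block), proves a uniqueness lemma for such witnesses, and uses a Miller-style coding to fold the entire witness into the new member $a_\xi$ so that membership in $\mathcal A$ becomes ``decode $a$ and check that the result is a minimal local witness'', which is $\Pi^1_2$. Without an explicit mechanism of this kind, your Step~3 does not go through.
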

We take this  as evidence that under certain forcing axioms and anti-large cardinal assumptions, the universe behaves somewhat like $\eL$. This idea is also corroborated by the proof of the above theorem.

We can also view this theorem as result regarding the consistency strength of a certain theory:
\begin{corollary}\label{c.main}
The theory $\ZFC + \BPFA +$ ``there is no $\mathbf{\Pi}^1_2$ MAD familiy'' has consistency strength of at least a remarkable cardinal.
\end{corollary}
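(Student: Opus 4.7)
The plan is to derive Corollary \ref{c.main} directly from Theorem \ref{t.main} by contraposition, and then to observe that the conclusion ``$\omega_1$ is remarkable in $\eL$'' immediately produces a remarkable cardinal inside an inner model. Concretely, I would begin with a hypothetical model $M \models \ZFC + \BPFA + $ ``there is no $\mathbf{\Pi}^1_2$ MAD family''. Applying the contrapositive of Theorem \ref{t.main} inside $M$, from $M \models \BPFA$ together with the non-existence of a $\mathbf{\Pi}^1_2$ MAD family in $M$, one concludes that $M \models $ ``$\omega_1$ is remarkable in $\eL$''.

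The next step is to convert this into a remarkable cardinal living inside a transitive model of $\ZFC$. The phrase ``$\omega_1^M$ is remarkable in $\eL^M$'' is, by definition, an $\eL^M$-internal first-order assertion about the ordinal $\omega_1^M$ regarded as a cardinal of $\eL^M$ (which it is, since it is uncountable and regular in the smaller model). Hence $\eL^M \models \exists \kappa \, (\kappa \text{ is a remarkable cardinal})$, so $\eL^M$ is a transitive class model of $\ZFC + $ ``there exists a remarkable cardinal''. This yields the implication
\[
\Con\bigl(\ZFC + \BPFA + \text{``no } \mathbf{\Pi}^1_2 \text{ MAD family''}\bigr) \;\Longrightarrow\; \Con\bigl(\ZFC + \text{``there exists a remarkable cardinal''}\bigr),
\]
which is exactly the content of Corollary \ref{c.main}.

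The main (and in fact the only) point needing care is the parsing of ``remarkable in $\eL$'' in Theorem \ref{t.main}: one must verify that this is standard shorthand for remarkability as evaluated inside the inner model $\eL$, so that its negation in the hypothesis of Theorem \ref{t.main} and its affirmation in the contrapositive both refer to the same $\eL$-internal first-order property of a specific cardinal. Once that is confirmed, the remarkable cardinal in $\eL^M$ is read off at no further cost; in particular, no absoluteness comparison of remarkability between $V$ and $\eL$ is required, because we only extract a single $\eL^M$-internal statement. So the deduction is essentially bookkeeping on top of Theorem \ref{t.main}, with no genuine obstacle beyond this linguistic check.
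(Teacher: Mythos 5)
Your proposal is correct and is precisely the argument the paper intends: the corollary is stated as an immediate consequence of Theorem~\ref{t.main}, obtained by contraposition (in a model of $\BPFA$ with no $\mathbf{\Pi}^1_2$ MAD family, $\omega_1$ must be remarkable in $\eL$, so $\eL$ is an inner model of $\ZFC$ with a remarkable cardinal). The paper gives no further details, and your careful parsing of ``remarkable in $\eL$'' as an $\eL$-internal property is the right reading, so nothing is missing.
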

This is indeed remarkable, since $\ZFC + \BPFA$ is known to have consistency strength of a $\Sigma_2$-reflecting cardinal, which is weaker than a remarkable cardinal.

\medskip

\paragraph{B}
Our work has some precursors in the literature: 
In \cite{caicedo-friedman} it is shown that under $\BPFA$, if $\omega_1$ is not remarkable in $\eL$ every predicate on $\powerset(\omega)$ which has a $\mathbf{\Sigma}_1$ definition in $\Hhier(\omega_2)$ also has a $\mathbf{\Sigma}^1_3$ definition. 

It was shown by Asger Törnquist in \cite{asger} that if there is an infinite  $\mathbf{\Sigma}^1_2$ MAD family, there is an infinite $\mathbf{\Pi}^1_1$ MAD family.
Unfortunately, the latter proof does not lift to show that there exists a $\mathbf{\Pi}^1_2$ infinite MAD under $\BFA + \omega_1$ is not remarkable in $\eL$.
The reason for this is that T\"ornquist's relies on properties of $\Sigma^1_2$ and $\Pi^1_1$ sets which do not hold for $\Sigma^1_3$ and $\Pi^1_2$ sets. 

Moreover, our proof can easily be adapted (we leave this to the reader) to show, e.g., that there is a $\Pi^1_2$ maximal eventually different family. For this type of family, no analogue of T\"ornquist's theorem is known.

\medskip

\paragraph{C}
The paper is organized as follows. In section \S2 we discuss a result of Caicedo and Velickovic which can be summed up as follows: $\BPFA$ implies that there is a well-ordering of $\powerset(\omega)$ of length $\omega_2$ with definable initial segments.
In \S 3 we discuss the role of the anti-large cardinal assumption, referring to work of Schindler, and discuss a technique of localisation which we have used before (e.g., \cite{}) and which takes a particularly simple form under $\BFA + \omega_1$ is not remarkable in $\eL$.
Finally, in \S 4 we prove Theorem~\ref{t.main}.
We close with open questions in \S 5.

\medskip

{\it Acknowledgements:
The first, second and third authors would like to thank the Austrian Science Fund (FWF) for the generous support
through START Grant Y1012-N35. The second author would also like to thank the FWF through generous support from project P 29999. 
}

\section{A well-ordering with $(\Sigma_1,\powerset(\omega_1))$-definable initial segments}

It was shown by Moore  that under $\BPFA$ there is a well-ordering of $\powerset(\omega)$ of order-type $\omega_2$.
Improving Moore's argument, Caicedo and Velickovic  \cite{caicedo-velickovic} obtained, under $\BPFA$, such a well-ordering which is definable by a $\Sigma_1$ formula with a parameter from $\powerset(\omega_1)$. 

In fact, their well-ordering has the following property which will be crucial to our argument.

\begin{theorem}\label{t.wo}
Under $\BPFA$ there is a well-ordering $\prec$ of $\powerset(\omega)$ such that for  some $\Sigma_1$ formula $\Phi_{\prec}(u,v,w)$  and some parameter $c_{\prec}\subseteq\omega_1$, 
\[
\big(\forall x \in \powerset(\omega)\big)(\forall I)\; \Big(\Phi_{\prec}(x,I,c_{\prec}) \iff I=\{y\in\powerset(\omega) : y \prec x\}\Big)
\]
\end{theorem}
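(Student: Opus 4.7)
The plan is to follow the approach of Caicedo and Velickovic \cite{caicedo-velickovic}, which refines Moore's earlier construction of a $\Sigma_1$-definable well-ordering of the reals from $\BPFA$. The central tool is Bagaria's characterization of $\BPFA$: it is equivalent to the assertion that $\Hhier(\omega_2)^V \prec_{\Sigma_1} \Hhier(\omega_2)^{V[G]}$ for every proper generic $G$; in particular, $\BPFA$ yields $2^{\aleph_0} = \aleph_2$, so $\powerset(\omega)$ can in principle be enumerated in an $\omega_2$-sequence.

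First I would invoke Moore's Mapping Reflection Principle (MRP), which is a consequence of $\BPFA$. Using MRP one constructs a canonical enumeration $\langle r_\alpha : \alpha < \omega_2 \rangle$ of $\powerset(\omega)$, together with a parameter $c_\prec \subseteq \omega_1$ and a $\Sigma_1$ formula $\psi(r,\alpha,c_\prec)$ expressing that $r = r_\alpha$. The essential feature of Moore's argument is that the enumeration is determined by $c_\prec$ in a way that is preserved by proper forcing, so $\psi$ can be phrased as the existence of a size-$\aleph_1$ witness; Bagaria's characterization then guarantees that such a witness exists in $V$ iff it exists in any proper extension, yielding genuine $\Sigma_1$-definability. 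Defining $x \prec y$ iff the unique $\alpha$ with $\psi(x,\alpha,c_\prec)$ is strictly less than the unique $\beta$ with $\psi(y,\beta,c_\prec)$ gives a $\Sigma_1$-definable well-ordering of $\powerset(\omega)$ of order-type $\omega_2$.

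Finally, since every $\prec$-initial segment has cardinality at most $\aleph_1$ and so belongs to $\Hhier(\omega_2)$, I would define $\Phi_\prec(x,I,c_\prec)$ to assert: there exist an $\alpha$ (coded by a subset of $\omega_1$), a surjection $h \colon \omega_1 \to \alpha$ and a bijection $g \colon \omega_1 \to I$ such that $\psi(x,\alpha,c_\prec)$ holds and, for every $\beta < \omega_1$, $\psi(g(\beta),h(\beta),c_\prec)$ holds. All these conjuncts can be amalgamated into one $\Sigma_1$ statement by packaging the required witnesses inside a single transitive model of a suitable fragment of $\ZFC$ of size $\aleph_1$ that contains the relevant parameters. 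The main obstacle is the middle step, i.e., showing that Moore's canonical enumeration is genuinely $\Sigma_1$ (not merely $\Sigma_2$); this relies on the delicate MRP-guided choice of clubs so that the coding survives proper forcing in a $\Sigma_1$-absolute manner, and is the technical heart of the Caicedo-Velickovic argument.
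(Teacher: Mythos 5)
The paper does not actually prove Theorem~\ref{t.wo}; it cites the Caicedo--Velickovic paper for the construction and only proves the surrounding equivalence (Fact~\ref{f.wo}) showing that $\Sigma_1$-definability of initial segments can be packaged via transitive models containing the parameter. Your final paragraph, where you amalgamate the witnesses $\alpha$, $h$, $g$ into a single transitive model of a fragment of $\ZFC$, is essentially the content of that Fact and is fine.

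There is, however, a genuine error at the first step: the Mapping Reflection Principle is \emph{not} a consequence of $\BPFA$. MRP implies the failure of $\square(\kappa)$ for every regular $\kappa\geq\omega_2$ and hence has very high consistency strength, whereas $\ZFC+\BPFA$ is equiconsistent with a $\Sigma_2$-reflecting cardinal (as the paper notes after Corollary~\ref{c.main}). MRP is a consequence of $\mathsf{PFA}$, not of its bounded form, so a proof that ``invokes MRP, which is a consequence of $\BPFA$'' fails at the outset. The whole point of the Caicedo--Velickovic refinement of Moore's argument is to replace the MRP-based coding by one built from Todorcevic's walks/oscillation machinery relative to a fixed ladder system on $\omega_1$, arranged so that the statement ``these ordinals code the real $r$'' is $\Sigma_1$ over $\Hhier(\omega_2)$ in a parameter $c_\prec\subseteq\omega_1$ and the forcing that adds the coding witnesses is proper; Bagaria's characterization of $\BPFA$ as $\Sigma_1(\Hhier(\omega_2))$-absoluteness for proper forcing then pulls the witnesses back into $V$. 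Your use of Bagaria's characterization and the subsequent definition of $\Phi_\prec$ are in the right spirit, but the engine driving the canonical enumeration must be the oscillation coding, not MRP; as written, the ``technical heart'' you defer to is attached to the wrong principle.
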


For convenience, we give a name to this type of well-order:
\begin{definition}\label{d.good.wo}
We say a well-order $\prec$ of $\powerset(\omega)$ with the property from Theorem~\ref{t.wo} has 
\emph{$(\Sigma_1,\powerset(\omega_1))$-definable good initial segments}. 
\end{definition}
Such a well-ordering is obviously very useful when one is interested in devising a recursive definition of optimal complexity.
Note that \cite{caicedo-velickovic} shows that their well-order of $\powerset(\omega)$ has the following property. 
Of course, this is equivalent to having $(\Sigma_1,\powerset(\omega_1))$-definable initial segments.
\begin{fact}\label{f.wo}
That a well-ordering of $\powerset(\omega)$ has $(\Sigma_1,\powerset(\omega_1))$-definable initial segments (i.e., the property from Theorem \ref{t.wo}) is equivalent to the conjunction of the following:
\begin{itemize}
\item $\prec$ is $\Sigma_1$  with a parameter $c_{\prec}\subseteq\omega_1$,
\item There is a formula $\Phi_{\text{is}}(u)$ such that for any transitive model $M$ with $c_{\prec} \in M$,
$M \vDash \Phi_{\text{is}}(c_{\prec})$ if and only if $M\cap \powerset(\omega)$ is an initial segment of $\prec$.
\end{itemize}
\end{fact}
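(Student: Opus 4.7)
The plan is to prove the two directions of the equivalence separately.

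\emph{Forward direction.} Given the $\Sigma_1$ formula $\Phi_\prec$ from Theorem~\ref{t.wo}, clause (1) is immediate from
\[
y \prec x \iff \exists I\, \big(\Phi_\prec(x, I, c_\prec) \wedge y \in I\big),
\]
which is $\Sigma_1$. For clause (2), I would take
\[
\Phi_{\text{is}}(u) \defeq (\forall x \in \powerset(\omega))(\exists I)\, \Phi_\prec(x, I, u).
\]
The key step is $\Sigma_1$-upward absoluteness for transitive models: if $M \vDash \Phi_\prec(x, I, c_\prec)$, then the same holds in $V$, so $I$ must be the true $\prec$-initial segment below $x$, and by transitivity of $M$ one obtains $I \subseteq M$. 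Hence $M \vDash \Phi_{\text{is}}(c_\prec)$ implies that $M \cap \powerset(\omega)$ is an initial segment of $\prec$; the converse uses that $M$ is closed enough to collect these initial segments as elements.

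\emph{Reverse direction.} Given (1) and (2), I would define
\[
\Phi_\prec(x, I, c) \defiff \exists M\, \big[M \text{ transitive} \wedge \{c,x,I\} \subseteq M \wedge M \vDash \big(\Phi_{\text{is}}(c) \wedge I = \{y \in \powerset(\omega) : y \prec x\}\big)\big],
\]
with $\prec$ inside $M$ computed via the $\Sigma_1$ formula from (1). This is $\Sigma_1$ since the outer $\exists M$ is $\Sigma_1$ and the remaining quantifiers are bounded by $M$. For correctness, if such $M$ exists then by (2), $M \cap \powerset(\omega)$ is a $\prec$-initial segment, so every $\prec$-predecessor of $x$ already lies in $M$. $\Sigma_1$-upward absoluteness then yields $I \subseteq \{y : y \prec x\}$; for the reverse inclusion I would use linearity of $\prec$ to rewrite $y \prec x$ as $y \neq x \wedge \neg(x \prec y)$, which is $\Pi_1$ and hence downward absolute.

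\emph{Main obstacle.} The delicate point is that the equivalence in (2) must hold for \emph{every} transitive $M$ containing $c_\prec$, so $\Phi_{\text{is}}$ must simultaneously handle (i) the inward-facing requirement that initial segments materialise as sets in $M$ and (ii) the outward-facing requirement that $M$ compute $\prec$ correctly, so that the $\Sigma_1$ and $\Pi_1$ descriptions of $\prec$ agree on $M \cap \powerset(\omega)$. I would resolve this by letting $\Phi_{\text{is}}$ additionally assert that $\prec$ is a linear ordering of $\powerset(\omega)^M$, which resolves issue (ii) and makes (i) automatic under the recursive construction of Caicedo--Velickovic's well-ordering.
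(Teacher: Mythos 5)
Your proof follows essentially the same route as the paper's: both directions rest on exactly the same two formulas, namely $\Phi_\prec(x,I,c_\prec)$ defined by the existence of a transitive $\in$-model containing the relevant parameters that satisfies $\Phi_{\text{is}}(c_\prec)$ and computes $I$ as the initial segment below $x$, and $\Phi_{\text{is}}(c_\prec)$ defined as $(\forall x\in\powerset(\omega))(\exists I)\,\Phi_{\prec}(x,I,c_{\prec})$. The absoluteness subtleties you flag at the end (that $M$ must actually contain the initial segments as elements and compute $\prec$ correctly) are genuine but are simply left implicit in the paper's two-line verification, so your additional care elaborates rather than alters the argument.
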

\begin{proof}
To see that the above implies that $\prec$ is well-ordering with $(\Sigma_1,\powerset(\omega_1))$-definable initial segments just
let $\Phi_\prec(x,I,c_\prec)$ be the formula
\begin{multline*}
(\exists M)\; \text{$M$ is a transitive $\in$-model,}
\text{$\{c_\prec,I\}\subseteq M$, and }\\
M\vDash\text{``}\Phi_{\text{is}}(c_{\prec}) \land I=\{y\in\powerset(\omega) \setdef y\prec x\}\text{''}
\end{multline*}
and observe $I=\{y\in\powerset(\omega) : y \prec x\} \iff \Phi_\prec(x,I,c_\prec)$.
For the other direction, simply let $\Phi_{\text{is}}(c_{\prec})$ be the formula
\[
\big(\forall x \in \powerset(\omega)\big)(\exists I)\; \Phi_{\prec}(x,I,c_{\prec}).\qedhere
\]
\end{proof}

For a proof that under $\BPFA$ there is such a well-ordering of $\powerset(\omega)$ with $(\Sigma_1,\powerset(\omega_1))$-definable good initial segments, we refer the reader to the excellent exposition in  \cite{caicedo-velickovic}. 

\section{Coding, reshaping, and localization}

We start by recalling the following well-known fact. Let $\mathcal B=\la b_\xi : \xi < \omega_1\ra$ be an arbitrary sequence of pairwise almost disjoint infinite subsets of $\omega$.
\begin{fact}\label{f.adc}
Under $\MA_{\aleph_1}$, for every subset of $S\subseteq \omega_1$ there is a $c\subseteq \omega$ such that
\begin{equation}\label{e.adc}
S=\{\xi<\omega_1 : c \cap b_\xi\text{ is infinite}\}.
\end{equation}
\end{fact}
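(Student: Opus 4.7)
The plan is to use the standard almost disjoint coding forcing (due to Solovay) and apply $\MA_{\aleph_1}$ to it. Fix $S\subseteq\omega_1$. Define a forcing $\apo_S$ whose conditions are pairs $(s,F)$ with $s\in[\omega]^{<\omega}$ and $F\in[\omega_1\setminus S]^{<\omega}$, ordered by $(s',F')\leq(s,F)$ iff $s\subseteq s'$, $F\subseteq F'$, and $(s'\setminus s)\cap b_\xi=\emptyset$ for every $\xi\in F$. The intuition is that $s$ is a finite approximation of the desired coding set $c$, while $F$ is a finite promise of indices $\xi\notin S$ along which no further elements of $b_\xi$ will be added.

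First I would verify that $\apo_S$ is $\sigma$-centered, hence ccc: any two conditions $(s,F),(s,F')$ sharing the same first coordinate have the common refinement $(s,F\cup F')$, so the countably many ``columns'' indexed by $s\in[\omega]^{<\omega}$ partition $\apo_S$ into centered pieces. Then I would identify two families of dense sets, each of size $\aleph_1$:
\begin{itemize}
\item For $\xi\in S$ and $n<\omega$, the set $D_{\xi,n}=\{(s,F):|s\cap b_\xi|\geq n\}$ is dense, using that $b_\xi$ is infinite while $b_\xi\cap b_\eta$ is finite for each $\eta\in F$, so we can always enlarge $s$ by picking fresh elements of $b_\xi$ avoiding the finitely many forbidden points $\bigcup_{\eta\in F}(b_\eta\cap b_\xi)$.
\item For $\xi\in\omega_1\setminus S$, the set $E_\xi=\{(s,F):\xi\in F\}$ is trivially dense.
\end{itemize}

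Applying $\MA_{\aleph_1}$ to $\apo_S$ and the family $\{D_{\xi,n}:\xi\in S,n<\omega\}\cup\{E_\xi:\xi\in\omega_1\setminus S\}$ of at most $\aleph_1$ dense sets yields a filter $G$ meeting all of them. Set $c=\bigcup\{s:(\exists F)\,(s,F)\in G\}$. For $\xi\in S$, the density of each $D_{\xi,n}$ forces $|c\cap b_\xi|\geq n$ for every $n$, so $c\cap b_\xi$ is infinite. For $\xi\notin S$, pick $(s_0,F_0)\in G\cap E_\xi$; then for any $(s,F)\in G$ we have a common extension $(s',F')$ with $\xi\in F'$, so no element added to $s_0$ beyond its current elements can lie in $b_\xi$. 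Hence $c\cap b_\xi\subseteq s_0\cap b_\xi$, a finite set.

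There is no real obstacle here; the only point requiring minor care is the density of $D_{\xi,n}$, which uses almost disjointness of the $b_\eta$'s in an essential way to ensure that the promises encoded by $F$ do not prevent us from further enlarging $s$ inside $b_\xi$.
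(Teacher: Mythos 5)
Your proof is correct and is exactly the standard Solovay almost disjoint coding argument that the paper itself invokes (the paper gives no details, merely citing the well-known proof via Kunen). The forcing, the $\sigma$-centeredness observation, the two families of dense sets, and the verification of \eqref{e.adc} are all as expected.
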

The proof of this fact is equally well-known; it uses Solovay's almost disjoint coding (see, e.g., \cite{kunen}).

We take the opportunity to introduce the following rather natural terminology:
\begin{definition} Let $\mathcal B=\la b_\xi : \xi < \omega_1\ra$ be an arbitrary sequence of pairwise almost disjoint infinite subsets of $\omega$. We shall say that $c \subseteq \omega$ \emph{almost disjointly via $\mathcal B$ codes} the set $S$ to mean
precisely that \eqref{e.adc} holds.
\end{definition}

\medskip

Our only use of the assumption that $\omega_1$ is not remarkable in $\eL$ is in the following fact (this was shown by Ralf Schindler in \cite{schindler}).
\begin{fact}\label{l.remarkable}
Suppose $\omega_1$ is not remarkable in $\eL$ and $\BPFA$ holds. Then there exists $r\in \powerset(\omega)$ such that $\omega_1=(\omega_1)^{\eL[r]}$.
\end{fact}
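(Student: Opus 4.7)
The plan is to argue by contrapositive: assume that $\omega_1^{\eL[r]} < \omega_1$ holds for \emph{every} real $r$, and derive that $\omega_1$ is remarkable in $\eL$, contradicting the standing hypothesis. The assumption is exactly the statement ``$\omega_1$ is inaccessible to reals'', so what one needs is Schindler's theorem that $\BPFA$ together with inaccessibility of $\omega_1$ to reals implies remarkability of $\omega_1$ in $\eL$. In the absence of any independent handle on remarkability, I would invoke Schindler's result as the heart of the matter; the outline below merely describes how one would structure the argument.

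To verify the defining property of remarkability, one must show that for every regular $\lambda > \omega_1$ there exist, in $\eL$, a countable ordinal $\bar\lambda$ and an elementary embedding $j\colon \HSize^{\eL}_{\bar\lambda} \to \HSize^{\eL}_{\lambda}$ with the appropriate critical-point behaviour, in particular $j(\crit(j)) = \omega_1$. I would obtain such embeddings by the standard $\BPFA$ reflection method: fix $\lambda$, code the structure $\HSize^{\eL}_{\lambda}$ by a subset of $\omega_1$ (either directly, since $\lambda$ is collapsible to $\aleph_1$ by a proper poset while preserving $\omega_1$, or via almost disjoint coding as in Fact~\ref{f.adc}), and observe that the existence of an embedding of the required form from a smaller $\HSize^{\eL}_{\bar\lambda}$ can be forced by a proper poset by first collapsing $\lambda$ to $\aleph_1$ and then taking a generic countable elementary substructure. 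Suitably formulated, this is a $\Sigma_1$ statement over $\HSize(\omega_2)$ with parameter the code of $\HSize^{\eL}_\lambda$, so $\BPFA$ reflects it down to $\Ve$, producing the embedding there.

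The main obstacle is that reflection yields an embedding in $\Ve$, whereas remarkability demands that the witnessing ordinal $\bar\lambda$ and embedding $j$ live \emph{in $\eL$}. This is where inaccessibility of $\omega_1$ to reals becomes essential: it ensures that every countable (in $\Ve$) initial segment of $\eL$ below $\omega_1$ is also countable in $\eL$, which together with absoluteness of elementarity for transitive structures lets one pull the reflected embedding back into $\eL$ with the correct critical point. Packaging these inner model theoretic bookkeeping steps correctly so that $j(\crit(j))$ lands exactly at $\omega_1$ is the delicate part carried out in \cite{schindler}; in practice I would simply cite that theorem to close the argument.
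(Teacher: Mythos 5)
Your reduction of this Fact to a citation of \cite{schindler} is legitimate at the top level, and in that sense it matches the paper, which states the result as a Fact and simply attributes it to Schindler. But the route you choose differs from the authors' actual argument, and the comparison is worth making. The paper (in a proof suppressed in the source) uses the other half of Schindler's equivalence: non-remarkability of $\omega_1$ in $\eL$ is cited only to conclude that the \emph{reshaping forcing} is proper; $\BPFA$ then supplies a set $Y\subseteq\omega_1$ such that $\eL[Y\cap\beta]\vDash$ ``$\beta$ is countable'' for all $\beta<\omega_1$, and almost disjoint coding (Fact~\ref{f.adc}) compresses $Y$ into a single real $r$ with $\omega_1=(\omega_1)^{\eL[r]}$. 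This constructive route is not incidental to the paper: the same reshape-then-code pattern is the engine of Lemma~\ref{l.witnessing}, whose proof is explicitly declared ``identical to that of Fact~\ref{l.remarkable}''. Your contrapositive --- $\BPFA$ plus inaccessibility to reals implies remarkability in $\eL$ --- is a correct restatement of the same equivalence, but Schindler's proof of that implication is itself obtained by first producing the real $r$ and deriving a contradiction; so as a strategy it is the Fact viewed in a mirror rather than an independent argument, and it yields no construction of $r$ that the rest of the paper could reuse.

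Two concrete points in your supporting outline would fail if executed. First, remarkability is a \emph{virtual} large-cardinal property: the embeddings $j\colon \HSize^{\eL}_{\bar\lambda}\to\HSize^{\eL}_{\lambda}$ are required to exist in a generic collapse extension, not ``in $\eL$'' as you write, so verifying the definition is not a matter of pulling an embedding back into $\eL$. Second, your claim that inaccessibility to reals ``ensures that every countable (in $\Ve$) initial segment of $\eL$ below $\omega_1$ is also countable in $\eL$'' is exactly backwards: that assertion amounts to $\omega_1=(\omega_1)^{\eL}$, whereas inaccessibility to reals says $(\omega_1)^{\eL[r]}<\omega_1$ for every real $r$. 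Since you explicitly defer the real work to \cite{schindler}, these slips do not invalidate the citation itself, but they would derail any attempt to fill in your sketch, and they suggest the mechanism by which non-remarkability is actually used (properness of reshaping) has not been identified.
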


\begin{convention}~\label{n.r}
\begin{enumerate}
\item For the rest of this article, let us suppose that $\omega_1=(\omega_1)^{\eL[r]}$ for some $r\in\powerset(\omega)$ which from now on shall remain fixed.
\item Fix an almost disjoint family $\mathcal F=\la f_\xi : \xi <\omega_1\ra$ which has a $\Sigma_1$ definition in $\eL[r]$ and such that
for any $\alpha < \omega_1$, $\la f_\xi : \xi < (\omega_1)^{\eL_\alpha[r]}\ra$ is the set satisfying this definition in  $\eL_\alpha[r]$.
\end{enumerate}
\end{convention}

It is a consequence of $\omega_1=(\omega_1)^{\eL[r]}$ and $\MA_{\aleph_1}$ that any predicate which is $\Sigma_1$ in $\Hhier(\omega_2)$ (with a parameter) can be localized in a strong sense. 
A version of this result %(which was observed independently by several people) 
can, e.g., be found in \cite{caicedo-friedman}. Said paper \cite{caicedo-friedman} also served as an important motivation for the present article.

To state the following localization lemma, let us make a definition which will be used throughout the paper.
\begin{definition}[Suitable models]
A \emph{suitable model} is a countable transitive $\in$-model $N$ such that $r \in N$,
$N\vDash\ZF^-$ and ``$\omega_1$ exists''.
\end{definition}

\begin{lemma}[A form of localization]\label{l.witnessing}
Suppose $\MA_{\aleph_1}$ holds (and recall that we are working under the assumption that $\omega_1 = (\omega_1)^{\eL[r]}$ made in~\ref{n.r}).
Let $\phi(y,\omega_1)$ be an arbitrary formula 
formula, where $y \in \powerset(\omega)$ and $\omega_1$ are parameters, and suppose that for some transitive $\in$-model $M$ with $\{\omega_1,y\} \in M$ it holds that $M \vDash \phi(y,\omega_1)$. 
Then there is $c\subseteq \omega$ such that the following holds: \begin{eqpar}\label{e.c}
Given any suitable model $N$ with $\{c,y\}\subseteq N$ the following must hold in $N$: ``There is a transitive $\in$-model $M^*$ such that $\{y,(\omega_1)^N\}\subseteq M^*$ and $M^*\vDash\phi(y,(\omega_1)^N)$''.
\end{eqpar}
\end{lemma}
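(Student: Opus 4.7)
The plan is to encode, into a single real $c$ via almost disjoint coding with the family $\mathcal{F}$ from Convention~\ref{n.r}, a $\subseteq$-continuous filtration by countable transitive models witnessing $\phi(y,\alpha)$ for a club of $\alpha<\omega_1$. First, given the transitive $M$ with $M\vDash\phi(y,\omega_1)$, by L\"owenheim--Skolem and Mostowski collapse I may assume that $M$ is transitive of size $\aleph_1$ with $\{r,y\}\cup(\omega_1+1)\subseteq M$. Form a continuous $\subseteq$-increasing chain $\langle M_\alpha:\alpha<\omega_1\rangle$ of countable elementary submodels with $\bigcup_\alpha M_\alpha=M$ and $r,y\in M_0$; a standard club argument yields a club $C\subseteq\omega_1$ with $M_\alpha\cap\omega_1=\alpha$ for $\alpha\in C$. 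Let $\bar M_\alpha$ denote the Mostowski collapse of $M_\alpha$. Since $r,y\subseteq\omega\subseteq M_\alpha$, the collapse fixes $r$ and $y$ and sends $\omega_1$ to $\alpha$; by elementarity, for each $\alpha\in C$, $\bar M_\alpha$ is a countable transitive model satisfying $\phi(y,\alpha)$.

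Fix a standard G\"odel coding of hereditarily countable sets by reals together with a recursive bijection $\omega_1\times\omega\to\omega_1$, and aggregate the sequence $\langle\bar M_\alpha:\alpha\in C\rangle$, together with $C$, into a single subset $X\subseteq\omega_1$ arranged so that for each $\alpha\in C$ the initial segment $X\cap\alpha$ both determines whether $\alpha\in C$ and suffices to decode $\bar M_\alpha$. Applying Fact~\ref{f.adc} (available under $\MA_{\aleph_1}$) to the almost disjoint family $\mathcal{F}$ yields $c\subseteq\omega$ with $X=\{\xi<\omega_1:c\cap f_\xi\text{ is infinite}\}$.

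For the verification, take a suitable $N$ with $\{c,y\}\subseteq N$; then $r\in N$ automatically. By Convention~\ref{n.r}(2), $N$ evaluates the $\Sigma_1$ definition of $\mathcal{F}$ in $\eL[r]^N$ and obtains $\mathcal{F}^N=\langle f_\xi:\xi<(\omega_1)^{\eL[r]^N}\rangle$. Using $\mathcal{F}^N$, $N$ decodes the corresponding initial segment of $X$ from $c$, and by the uniformity of the coding recovers the transitive model $\bar M_{(\omega_1)^N}$, which satisfies $\phi(y,(\omega_1)^N)$; this is the required $M^*$ in $N$.

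I expect the main obstacle to be verifying that inside an arbitrary suitable $N$ the decoding actually reaches $(\omega_1)^N$ (not merely $(\omega_1)^{\eL[r]^N}$) and that $(\omega_1)^N$ lies in the club $C$. The first point requires using $\omega_1=(\omega_1)^{\eL[r]}$ (Convention~\ref{n.r}(1)) together with the closure inherent in being a suitable model to obtain the identity $(\omega_1)^{\eL[r]^N}=(\omega_1)^N$; in practice this is secured by building into $X$ a reshaping component parallel to the one appearing in the proof sketched around Fact~\ref{l.remarkable}, so that any suitable $N$ containing $c$ automatically computes its $\omega_1$ correctly inside $\eL[r]^N$. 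The second point is handled by the freedom in choosing the chain $\langle M_\alpha\rangle$ so that $C$ absorbs every countable ordinal with the minimal closure properties needed to be the $\omega_1$ of a suitable model.
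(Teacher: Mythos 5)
Your overall architecture matches the paper's: reflect the statement to countable ordinals via a continuous chain of elementary submodels, package the resulting data into a subset of $\omega_1$, and push it into a real by almost disjoint coding via $\mathcal F$. The collapse computation in your first paragraph (that $\bar M_\alpha\vDash\phi(y,\alpha)$ for $\alpha\in C$ because the collapse fixes $r,y$ and sends $\omega_1$ to $\alpha$) is fine and is essentially what the paper extracts from its elementary submodels of $L_{\omega_2}[S^*,y]$.

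The genuine gap is in the verification, at exactly the point you flag second and then dismiss. For the conclusion you need a transitive model of $\phi(y,(\omega_1)^N)$ \emph{for the specific ordinal} $(\omega_1)^N$, so you need $(\omega_1)^N\in C$; if $(\omega_1)^N\notin C$, decoding $X\cap(\omega_1)^N$ only yields models of $\phi(y,\alpha)$ for various $\alpha<(\omega_1)^N$, which is useless. Your claim that one can choose the chain $\langle M_\alpha\rangle$ ``so that $C$ absorbs every countable ordinal with the minimal closure properties needed to be the $\omega_1$ of a suitable model'' is false: for any club $C$ fixed in advance there are suitable models $N$ containing $c$ and $y$ with $(\omega_1)^N\notin C$ (suitability only asks for $\ZF^-$ plus ``$\omega_1$ exists,'' and by collapsing cardinals generically over a given suitable model one realizes many ordinals as $(\omega_1)^N$ that no pre-chosen club can cover). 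What is needed is an \emph{active} device that makes it impossible for a suitable model containing the code to have its $\omega_1$ outside the club. This is precisely what the paper builds into the odd part of $Y$: for each $\beta\in D$ it codes into $Y\cap[\beta,\beta+\omega)$ a well-founded relation of rank at least $\min\bigl(D\setminus(\beta+1)\bigr)$, so that if $(\omega_1)^N=\gamma\notin D$ then $N$ would contain a code for a well-founded relation on a countable set of rank $\geq\gamma$, contradicting $N\vDash\ZF^-$. Your ``reshaping component'' addresses only the separate (and also real) issue of making the decoding recursion reach $(\omega_1)^N$; it does not substitute for these rank markers, and without them the proof does not go through.
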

\begin{proof}
Fix a transitive model $M$ as in the lemma. 
Find $S \subseteq \omega_1$ such that via G\"odel pairing, $S$ gives rise to a well-founded binary relation $S^*$ on $\omega_1$ whose transitive collapse is $\la M, \epsilon\res M\rangle$.
We can ask that $y$ and $\omega_1$ are mapped to specific points of in $\la,\omega_1,S^*\ra$ by the inverse of the collapsing map, say to $0$ and $1$.

Let 
\[
D =\{\beta\in\omega_1 : (\exists \mathcal{N^*})\; \mathcal N^* \prec L_{\omega_2}[S^*,y], \{S^*,y\}\in \mathcal N^*, \beta=\omega_{1}\cap \mathcal N^*\}.
\]
For $Y\subseteq\ON$, let $\mathrm{Even}(Y)=\{\xi : 2\xi \in Y\}$ and $\mathrm{Odd}(Y)=\{\xi : 2\xi+1 \in Y\}$. 
Choose $ Y$ to be any subset of $\omega_1$ such that $\mathrm{Even}(Y)=S^*$ and for each $\beta \in D$, the preimage under $G$ of $\mathrm{Odd}(Y\cap [\beta, \beta+\omega)$ is a well-founded binary relation of rank at least $\min\big(D\setminus (\beta+1)\big)$.

\begin{claim}\label{c.witnessing}
$Y\subseteq \omega_1$ satisfies the following:
\begin{eqpar}\label{e.Y}
Given any suitable model $N$ with $\{Y \cap (\omega_1)^N,y\}\subseteq N$ the following must hold in $N$: ``There is a transitive $\in$-model $M^*$ such that $\{y,(\omega_1)^N\}\subseteq M^*$ and $M^*\vDash\phi(y,(\omega_1)^N)$''.
\end{eqpar}
 \end{claim}
 \begin{proof}
 Too see that $Y$ indeed satisfies \eqref{e.Y} let $\mathcal N$ as in \eqref{e.Y} be given.
 By choice of $Y$, $\beta=(\omega_{1})^{N} \in D$. 
 Thus there a transitive model $\overline{N}^*$ and an elementary embedding $j: \overline{N}^* \to L_{\omega_2}[S^*,y]$ with critical point $\beta$ and such that $\{S^*,y,\omega_1\} \subseteq \ran(j)$. 
By elementarity 
 $\overline{N}^*\vDash$``The transitive collapse of $\la \beta, S^*\cap \beta\ra$ is a transitive $\in$-model $M^*$ of $\Phi(y,\beta)$''.
But taking the transitive collapse is absolute, so  $N$ must satisfy the same sentence.
\renewcommand{\qedsymbol}{{\tiny Claim \ref{c.witnessing}.} $\Box$}
\end{proof} 
Finally, we find $c\in\powerset(\omega)$ which almost disjointly via $\mathcal F$ codes the set $Y\subseteq\omega_1$ constructed above.
By a proof identical to that of Fact~\ref{l.remarkable}, the real $c$ satisfies \eqref{e.c}, proving the lemma. 
\renewcommand{\qedsymbol}{{\tiny Lemma \ref{l.witnessing}.} $\Box$}
\end{proof}

\section{Proof of Theorem \ref{t.main}}

In this section we prove Theorem~\ref{t.main} in the following, slightly more general form:
\begin{theorem}\label{t.better}
Suppose $\powerset(\omega)$ has a well-ordering of length $\omega_2$ with $(\Sigma_1,\powerset(\omega_1))$-definable initial segments, $\MA_{\aleph_1}$ holds, and  $\omega_1 = (\omega_1)^{\eL[r]}$ for some $r\in\powerset(\omega)$.
Then there is a $\mathbf{\Pi}^1_2$ MAD family.
\end{theorem}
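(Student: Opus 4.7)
The plan is to construct $\mathcal A$ by transfinite recursion along the well-ordering $\prec$ furnished by the hypotheses and then bring its definition down to $\mathbf{\Pi}^1_2$ by localizing to suitable models. I would enumerate $\powerset(\omega)=\la x_\alpha : \alpha<\omega_2\ra$ via $\prec$, pre-fix an arithmetically definable infinite almost disjoint family $\mathcal A_0\subseteq[\omega]^\omega$ whose union is coinfinite (to guarantee the final family is infinite), and declare $x_\alpha\in\mathcal A$ iff $x_\alpha\in[\omega]^\omega$ and $x_\alpha$ is almost disjoint from every element of $\mathcal A_0$ as well as from every $x_\beta\in\mathcal A$ with $\beta<\alpha$. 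The resulting $\mathcal A\supseteq\mathcal A_0$ is pairwise almost disjoint by construction and maximal because any real $y\notin\mathcal A$ was rejected at its $\prec$-stage due to either some $b\in\mathcal A_0$ or some earlier $a\in\mathcal A$ with $|a\cap y|=\infty$.

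To obtain the desired complexity, I would first apply almost disjoint coding (Fact~\ref{f.adc}) via $\mathcal F$ to choose a real $c^*\subseteq\omega$ coding the $\omega_1$-parameter $c_\prec$. By Convention~\ref{n.r}, any suitable model $N$ containing $r$ and $c^*$ correctly decodes $c_\prec^N:=c_\prec\cap(\omega_1)^N$, and by upward absoluteness of $\Sigma_1$ (together with the trichotomy of the well-order $\prec$) the internally defined well-order $\prec^N$ agrees with $\prec$ on $N\cap\powerset(\omega)$. Next, I would apply Lemma~\ref{l.witnessing} to a single $\Sigma_1$ statement over $\Hhier(\omega_2)$ asserting that for every real there is a transitive model witnessing $\Phi_{\text{is}}$ and containing that real (globally witnessed by $\Hhier(\omega_2)$ itself). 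This guarantees that for every suitable $N$ containing $r,c^*,x$, $N$ internally sees a transitive $M^*$ with $x\in M^*$ and $M^*\vDash\Phi_{\text{is}}(c_\prec^{M^*})$. By Fact~\ref{f.wo}, $M^*\cap\powerset(\omega)$ is then a genuine $\prec$-initial segment containing $x$, and so by $\in$-induction the internal recursion $\mathcal A^{M^*}$ coincides with $\mathcal A\cap M^*$.

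The final $\mathbf{\Pi}^1_2$ definition then reads: $x\in\mathcal A$ iff $x\in[\omega]^\omega$, $x$ is almost disjoint from every element of $\mathcal A_0$, and for every suitable $N$ containing $r,c^*,x$ there is inside $N$ such an $M^*$ with $M^*\vDash x\in\mathcal A^{M^*}$. The universal quantifier over codes of suitable $N$ contributes a $\Pi^1_1$ prefix, while the internal existence of $M^*$ is expressed by a $\Sigma^1_1$ external quantifier over codes of the countable transitive $M^*$, yielding $\mathbf{\Pi}^1_2$ overall. The hard part I foresee is arranging the localization real $c^*$ to be uniform in $x$: Lemma~\ref{l.witnessing} as stated supplies a witness tied to the specific model being localized, so I must carefully formulate a single $\Sigma_1(\Hhier(\omega_2))$ statement that makes no reference to any particular real, yet whose internal counterpart provides an initial-segment-witnessing $M^*$ for each $x\in N$. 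Verifying the equivalence in the downward direction then reduces to combining the upward absoluteness of $\prec$ with the downward-closedness of $M^*\cap\powerset(\omega)$ forced by $\Phi_{\text{is}}$, which ensures that a truthful falsifier $y\prec x$ with $y\in\mathcal A$ and $|y\cap x|=\infty$ must already live in $M^*$ and be detected internally.
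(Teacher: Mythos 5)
Your greedy recursion along $\prec$ does produce a MAD family, and the general strategy (recurse along the definable well-order, then localize to countable suitable models) is the same as the paper's. But the localization step, which you yourself flag as ``the hard part,'' is where the proposal breaks down, and the fix is not a refinement of your plan but a different mechanism. The statement you want to localize --- ``for every real $x$ there is a transitive $M^*$ with $x\in M^*$ and $M^*\vDash\Phi_{\text{is}}(c_\prec)$'' --- is $\Pi_2$ over $\Hhier(\omega_2)$, not $\Sigma_1$, so Lemma~\ref{l.witnessing} does not apply to it; that lemma localizes the existence of a \emph{single} witnessing model to suitable $N$ containing a \emph{single} coding real $c$. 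No uniform real $c^*$ can do what you ask: the witnessing model $M^*$ for a given $x$ must contain all $\prec$-predecessors of $x$ (else $\Phi_{\text{is}}$ fails for it), so the witnessing data genuinely varies with $x$ and cannot be packaged into one parameter. Relatedly, in the downward direction your definition is not sound as stated: inside a suitable $N$, an internal $M^*$ only satisfies $\Phi_{\text{is}}$ with the truncated parameter $c_\prec\cap(\omega_1)^{M^*}$, and nothing forces such an $M^*\cap\powerset(\omega)$ to be a genuine $\prec$-initial segment in $\Ve$, so $M^*$ may wrongly certify $x\in\mathcal A^{M^*}$.

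The paper's solution is to make each member of the family \emph{carry its own witness}: the element added at stage $\xi$ is not the $\prec$-least counterexample $d_\xi$ itself but $a_\xi=\code(\bar c_\xi(0),\bar c_\xi(1),c_\xi)$, where the Miller-style functions $\code$ and $\decode$ (Fact~\ref{f.coding.miller}) hide in the intersection pattern of $a_\xi$ with $\vec a_\omega$ an entire $\omega$-sequence $\bar c_\xi$ consisting of $d_\xi$, an almost disjoint code for $\la a_\nu:\nu<\xi\ra$ via $\mathcal F$, and an infinite tower of ``$k$-localizers'' produced by iterating Lemma~\ref{l.witnessing}. The $\Pi^1_2$ definition then quantifies universally over suitable models containing $a$ and $\decode(a)$ only, and the lemmas \ref{l.global-local}--\ref{l.unique.local} establish that possession of such a localizer tower is equivalent to being a genuine minimal witness and that the witness is unique. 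If you want to salvage your write-up, you need to replace the uniform parameter $c^*$ by this per-element coding apparatus; without it the equivalence between the $\Pi^1_2$ formula and membership in $\mathcal A$ cannot be verified in either direction.
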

It is clear by Theorem~\ref{t.wo} and Lemma~\ref{l.remarkable} that $\BPFA$ implies the hypothesis, so proving the above theorem will indeed prove Theorem~\ref{t.main}.

\medskip

We supress the parameter $r$ and assume $\omega_1 = (\omega_1)^{\eL}$; our argument will relativize to $r$ trivially.
\begin{convention}~\label{n.wo}
By Theorem~\ref{t.wo} we can fix a well-ordering $\prec$ of $\powerset(\omega)$ with $(\Sigma_1,\powerset(\omega_1))$-definable initial segments, together with a parameter $c_{\prec}\subseteq\omega$ and a formula $\Phi_{\textup{is}}(c_{\prec})$ as in Fact~\ref{f.wo}. 
\end{convention}

We shall inductively construct a sequence $\la a_\nu : \nu < \omega_2\ra$ such that $\mathcal A = \{a_\nu : \nu < \omega_2\}$ will be a $\mathbf{\Pi}^1_2$ MAD family.

The most straightforward formula defining a MAD family $\mathcal A$ would express that $a\in \mathcal A$ iff there is an initial segment $\la a_\nu : \nu \leq  \xi\ra$ of the construction with $a=a_\xi$; that is, assuming we can find a formula expressing that $\la a_\nu : \nu \leq  \xi\ra$ is an initial segment of this construction.
But of course it is not clear how any projective formula should express such a fact about $\la a_\nu : \nu < \xi\ra$, this being an object of size $\omega_1$. 
A first step towards a solution is that $a_\xi$ should \emph{code} certain sets of size $\omega_1$, including $\la a_\nu : \nu < \xi\ra$. 
Almost disjoint coding via $\mathcal F$ (see Fact~\ref{f.adc}) allows us to find a real coding these large sets, and then some reals `localizing' this coding, i.e., ensuring that coding an initial segment of the construction is expressible by a $\Pi^1_2$ formula.
Using a variant of the following coding from \cite{miller} we can then code these reals into $a_\xi$.%\footnote{Using this variant of Miller's coding as a last step may seem superfluous---it likely isn't. Readers are very welcome to send correct proofs avoiding this step to the authors.}

\subsection{Coding into an almost disjoint family}

We call the following fact from Miller's article \cite{miller} to the reader's attention.

\begin{fact}[see {\cite[Lemma~8.24,~p.~195]{miller}}]\label{f.coding.miller.simple}
Fix $z \in \powerset(\omega)$ and suppose $\vec a=\la a^\nu : \nu < \xi \ra$ is a countable sequence of pairwise almost disjoint infinite sets.
For any $d \in [\omega]^\omega$ which is almost disjoint from every element of $\vec a$ there is $a \in [\omega]^\omega$ such that
\begin{itemize}
\item $a \cap d$ is infinite, 
\item $a$ is disjoint from each $a^\nu$ for $\nu<\xi$, 
\item and $z$ is computable from $a$ and $\vec a \res \omega=\la a^n : n < \omega \ra$.
\end{itemize}
\end{fact}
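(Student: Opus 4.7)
The plan is to prove the lemma by a direct recursive construction of $a$ in $\omega$ many stages, using Miller's parity-encoding trick. We build $a = \{e_n : n < \omega\} \cup \{y_n : n \in Z'\}$ for some $Z' \subseteq \omega$ determined along the way, where each $e_n \in d$ ensures that $a \cap d$ is infinite and each conditionally-included $y_n \in a^n$ is used to force the parity of $|a \cap a^n|$ to equal $\chi_z(n)$. The decoder then recovers $z$ by computing $|a \cap a^n| \bmod 2$ for every $n < \omega$. (I am reading ``$a$ is disjoint from each $a^\nu$'' in the standard a.d.-family sense, i.e., $|a \cap a^\nu| < \aleph_0$, as otherwise the statement is inconsistent with $\bigcup_\nu a^\nu = \omega$.)

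For the setup, since $\xi$ is countable, enumerate $\vec a$ as $\la b_k : k < \omega\ra$ via a bijection $\phi\colon \omega \to \xi$, and fix a nondecreasing function $N\colon \omega \to \omega$ such that $\{a^m : m \leq n\} \subseteq \{b_k : k \leq N(n)\}$ for every $n < \omega$ (for instance, interleave $\omega$ with an enumeration of $\xi \setminus \omega$ and take $N(n) = 2n$). At stage $n$, choose $e_n \in d \setminus \bigcup_{k \leq N(n)} b_k$ strictly larger than anything chosen before (possible because each $d \cap b_k$ is finite, so the excluded set is finite and we are removing only finitely many elements from $d$), and then choose a candidate $y_n \in a^n \setminus \bigcup_{k \leq N(n),\, b_k \neq a^n} b_k$ also strictly large (possible because $a^n \cap b_k$ is finite whenever $b_k \neq a^n$). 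Place $e_n$ in $a$ unconditionally, and place $y_n$ in $a$ if and only if doing so makes $|a \cap a^n|$ (as constructed through stage $n$) have parity $\chi_z(n)$.

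The essential observation is that for every $m > n$ both $e_m$ and $y_m$ avoid $a^n$, since $a^n$ is among the finitely many sets $\{b_k : k \leq N(m)\}$ excluded at stage $m$. Consequently $|a \cap a^n|$ is permanently fixed at the end of stage $n$, and its final parity equals $\chi_z(n)$, giving the decoding recipe. The other two bullets are routine: $a \cap d \supseteq \{e_n : n < \omega\}$ is infinite; and for every $\nu < \xi$, all but finitely many stages produce elements avoiding $a^\nu = b_{\phi^{-1}(\nu)}$, so $a \cap a^\nu$ is finite.

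The only subtlety lies in the combined bookkeeping handled by $\phi$ and $N$, which reconciles ``coding uses only $\vec a \res \omega$'' (the decoder's data) with ``almost disjointness must hold against all of $\vec a$.'' Beyond that, everything reduces to the elementary principle that removing a finite almost disjoint family from an infinite set leaves a cofinite (hence infinite) subset.
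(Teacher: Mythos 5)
Your construction is correct and is essentially the same argument as the paper's: Miller's parity trick, encoding $z(n)$ in the parity of $\lvert a\cap a^n\rvert$ while arranging that nothing chosen after stage $n$ can enter $a^n$, so the count stabilizes. If anything, your stage-by-stage exclusions are more careful than the paper's one-line sketch, which takes $a\supseteq d$ together with finite blocks $G_n\subseteq a^n$ of prescribed cardinality but, as literally written, neither keeps $G_m$ ($m\neq n$) out of $a^n$ for $n<\omega$ nor accounts for $d\cap a^n$ in the parity the decoder actually reads off --- both points that your bookkeeping, like the paper's own later refinement in Fact~\ref{f.coding.miller}, handles correctly.
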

Using this fact, Miller succeeds in constructing a co-analytic MAD family in $\eL$:
he recursively constructs $\la a_\nu : \nu < \omega_1\ra$ such that in the end, $\mathcal A = \{a_\nu : \nu < \omega_1\}$ turns out to be a 
$\mathbf{\Pi}^1_1$ MAD family.
At some initial  stage $\xi < \omega_1$ having constructed $\vec a=\la a^\nu : \nu < \xi \ra$ he considers a counterexample $d$ to the maximality of the family 
$\{a^\nu : \nu < \xi\}$
 constructed so far.
Instead of adding this set $d$ to $\vec a$, he adds $a$ as in the fact above, which in addition codes some information $z$ so as to bring down the definitional complexity of $\mathcal A$.

\medskip

Since we shall need a variant of this type of coding, let us repeat Miller's proof of the above fact.
\begin{proof}[Proof of Fact~\ref{f.coding.miller.simple}]
Let $\vec b=\langle b^n : n \in \nat\rangle$ enumerate $\{a^\nu :  \omega \leq \nu < \xi\}$.
For each $n\in\omega$, chose a finite set $G_n \subseteq a^n \setminus \bigcup \{ b^k : k< n\}$ so that
$\lvert G_n \rvert$ is even if $n\in z$, and odd otherwise.
Finally, let $a = d \cup \bigcup \{F_n : n\in \omega\}$.
\end{proof}

For our purposes the previous fact is useless, since as $2^\omega=\omega_2$ under $\BPFA$ we shall have to deal with uncountable sequences $\vec a=\la a^\nu : \nu < \xi \ra$. 
Interestingly, there is a variant of the above construction that allows us to deal with uncountable sequences.
\medskip

Before we describe this variant let us commit, once and for all, to some sequence (to be used for coding purposes) as an initial segment of the MAD family we are about to construct.
\begin{convention}\label{n.a}
Let us fix, for the rest of this article, some sequence $\vec a_\omega=\la a_n : n \in \omega\ra$ of infinite sets any two of which are almost disjoint.
\end{convention}

We now state our variant of Miller's coding lemma. 
For this variant, we must make an additional assumption (the existence of $c$ below) which will turn out to be innocent.

\begin{fact}\label{f.coding.miller}
Suppose $\vec a=\la a_\nu : \nu < \xi\ra$ is a (possibly uncountable) sequence of pairwise almost disjoint infinite subsets of $\omega$ such that $\vec a\res\omega=\vec a_\omega$. 
Further suppose we have $c \in [\omega]^\omega$ satisfying the following:
\begin{itemize}
\item $c$ is almost disjoint from each $a_\nu$, for $\omega\leq\nu<\xi$, and
\item $c \cap a_n$ is infinite for each $n\in\omega$.
\end{itemize}
Then for any $z \in \powerset(\omega)$
and any $d \in [\omega]^\omega$ which is almost disjoint from every element of $\vec a$ there is $a \in [\omega]^\omega$ such that
\begin{itemize}
\item $a \cap d$ is infinite, 
\item $a$ is disjoint from each $a_\nu$ for $\nu<\xi$, 
\item and $z$ is computable from $a$ and $\vec a \res \omega=\la a_n : n < \omega \ra$.
\end{itemize}
In fact there are functions $\decode \colon \powerset(\omega)  \to \powerset(\omega)$ and $\code \colon \powerset(\omega)^3 \to \powerset(\omega)$, both of which are computable in $\vec a_\omega$, such that 
$a$ as above is given by $a=\code(z,d,c)$ and $z$ can be recovered from $a$ as $z=\decode(a)$.
\end{fact}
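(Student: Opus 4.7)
My plan is to adapt Miller's argument from Fact~\ref{f.coding.miller.simple}, using the set $c$ as a replacement for the enumeration of $\{a_\nu : \omega \leq \nu < \xi\}$ that is no longer available in the uncountable case. The crucial property of $c$ is that any of its subsets is automatically almost disjoint from every $a_\nu$ with $\omega \leq \nu < \xi$; this lets us ``avoid'' the entire tail of $\vec a$ in bulk without enumerating it.

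Concretely, I will construct finite sets $G_n$ ($n < \omega$) by recursion, taking
\[
G_n \subseteq X_n := (c \cap a_n) \setminus \bigl(d \cup \textstyle\bigcup_{k<n} a_k\bigr).
\]
Each $X_n$ is infinite: $c \cap a_n$ is infinite by hypothesis, while $d \cap a_n$ and each $a_k \cap a_n$ (for $k < n$) are finite by almost disjointness. Hence $|G_n|$ may be chosen freely, subject to the parity requirement that $|d \cap a_n| + |G_n| + \sum_{k<n} |G_k \cap a_n|$ be odd iff $n \in z$. At stage $n$ the contributions from $d \cap a_n$ and from $G_k \cap a_n$ ($k < n$) are already determined, so the recursion is well-defined. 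I then define $\code(z,d,c) := a := d \cup \bigcup_{n<\omega} G_n$ and $\decode(a)(n) := |a \cap a_n| \bmod 2$.

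The required properties are then routine to verify. One has $a \cap d = d$, which is infinite. For $\nu \geq \omega$, $a \cap a_\nu \subseteq (d \cap a_\nu) \cup (c \cap a_\nu)$, which is finite by the assumptions on $d$ and $c$. For $m < \omega$, the restriction $G_k \subseteq a_k \setminus \bigcup_{j<k} a_j$ forces $G_k \cap a_m = \emptyset$ whenever $k > m$, so $a \cap a_m$ decomposes as the finite disjoint union $(d \cap a_m) \sqcup G_m \sqcup \bigsqcup_{k<m}(G_k \cap a_m)$; this is finite, and its cardinality has the prescribed parity by construction, so $\decode(a)(m) = z(m)$. Finally, both $\code$ and $\decode$ are plainly computable procedures using $\vec a_\omega$ (and, for $\code$, also the inputs $z,d,c$) as oracles.

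The main---indeed, only---subtlety is the uncountable tail: without $c$ there would be no way to uniformly prevent $a$ from picking up infinitely many elements of some $a_\nu$ with $\nu \geq \omega$. Once we commit to placing each $G_n$ inside $c$, this obstacle disappears and the remainder of the construction is essentially Miller-style bookkeeping.
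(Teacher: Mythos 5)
Your proposal is correct and follows essentially the same route as the paper's own proof: finite pieces $G_n$ (the paper's $F_n$) are chosen inside $c\cap a_n\setminus(d\cup\bigcup_{k<n}a_k)$ to force the parity of $|a\cap a_n|$, and $a$ is taken to be $d$ together with these pieces, with $c$ guaranteeing almost disjointness from the uncountable tail. In fact your parity bookkeeping is slightly more careful than the paper's, since you explicitly account for the contributions $G_k\cap a_n$ for $k<n$, which the paper's stated parity condition on $|F_n\cup(d\cap a_n)|$ glosses over.
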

The name $\decode$ was chosen to remind us that this function will be used to `decode' $z$ from $a$, and likewise, the name $\code$ should remind us that the function produces a `code' (for $z$).
\begin{proof}[Proof of Fact~\ref{f.coding.miller}]
We define $\code \colon \powerset(\omega)^3\to\powerset(\omega)$ as follows.
Let $F_n$ be the shortest finite initial segment of 
\[
c \cap a_n \setminus \big (d \cup \bigcup\{a_k : k< n\}\big)
\] 
such that $\lvert F_n \cup (d \cap a_n) \rvert$ is even if $n\in z$ and odd otherwise.
Clearly, $F_n$ can be found by a procedure which is computable in $\vec a_\omega$, $c$, $d$, and $z$.
Now define the function $\code$ by
\[
\code(d,c,z) = d \cup \bigcup \{F_n : n\in\omega\}.
\]
Moreover, we define $\decode \colon \powerset(\omega)  \to \powerset(\omega)$ as follows: 
Given $a \in [\omega]^\omega$ let 
\[
\decode(a)=\{n \in\omega : \lvert a \cap a_n\rvert\text{ is even}\}.
\]
Clearly, these functions satisfy the conditions in the lemma.
\end{proof}

\subsection{Minimal local witnesses} 

The functions $\code$ and $\decode$ together with the almost disjoint coding into reals of subsets of $\omega_1$ via $\mathcal F$ will help us arrange that $a_\xi$ codes $\la a_\nu : \nu<\xi\ra$.
But crucially, we need the fact that $a_\xi$ codes an initial segment of the construction (up to stage $\xi$, some ordinal below $\omega_2$) to be witnessed by a $\mathbf{\Pi}^1_2$ formula (the same formula for all $\xi<\omega_2$).
This involves uniquely selecting a real $c_\xi \in \powerset(\omega)$ which we call a \emph{minimal local witnesses} and whose task is to \emph{localize} the coding to suitable countable models. 
Uniquely selecting such a real is a non-trivial task, and to tackle it we introduce some terminology.

\begin{convention}\label{n.f}
For the remainder of this article, let $F\colon \omega^2\to \omega$ denote some fixed recursive bijection.
\end{convention}
\begin{definition}~\label{d.basic.coding}
\begin{enumerate}

\item Given $c \subseteq \omega$ and $n\in\omega$ we write $(c)_n$ for $\{m \in\omega : F(n,m)\in c\}$ (where $F$ is the bijection of $\omega$ with $\omega^2$ from~\ref{n.f} above).

\item Given $c \subseteq \omega$ we write $\seqdc{c}$ for the sequence
$\la (c)_n : n\in\omega\ra$.

\item We say \emph{$c \subseteq \omega$ almost disjointly via $\mathcal F$ codes a sequence $\vec b$  of length $< \omega_2$} to mean that $c$ almost disjointly via $\mathcal F$ codes $S\subseteq \omega_1$ such that interpreting $S$ as a binary relation $S^*$ on $\omega_1$ (via G\"odel pairing), this relation $S^*$ is well-founded, $S^*$ is isomorphic to $\in$ restricted to the transitive closure of $\vec b$, and that moreover $\vec b=\la b_\nu : \nu < \xi\ra$ is a sequence of length $\xi <\omega_2$.
\end{enumerate}
\end{definition}
\iffalse
Observe that the statement that $c \subseteq \omega$ almost disjointly via $\mathcal F$ codes a sequence $\vec b$  of length $< \omega_2$ is a $\Sigma_1$ property of $c$, allowing $\omega_1$ as a parameter. And also that $c$ does not satisfy this property is $\Sigma_1$, allowing $\omega_1$ as a parameter---since then either $S^*$ is not well-founded, or its transitive collapse is a set of the wrong type.
\fi
%We fix a convenient method of decoding from a single subset $c$ of $\omega$, a whole sequence $\seqdc{c}=\la (c)_n : n\in\omega\ra$,  with $(c)_n$ denoting `the $n$th column of $c$'. 

\medskip

The crucial definition for our proof of Theorem~\ref{t.better} (and thus, of Theorem~\ref{t.main}) is that of \emph{minimal local witness}. 

\begin{remark}
In the end, our MAD family will be 
\[
\mathcal A = \{a\in [\omega]^\omega : c=\decode(a)\text{ is a minimal local witness and }a=\code\big((c)_0, (c)_1, c\big)\}
\]
We will show below that being a minimal local witness is expressible by a $\mathbf{\Pi}^1_2$ formula.
Thus, $\mathcal A$ will be $\mathbf{\Pi}^1_2$.
\end{remark}

Before we introduce the notion of minimal local witnesses, we make another convenient definition, for which some motivation should be provided by the previous remark.
\begin{definition}
We shall say that a sequence $\vec b=\la b_\nu : \nu<\xi\ra$ of length $\xi < \omega_2$ is a \emph{coherent candidate} if 
 $\vec a_\omega \subseteq \vec b$ and moreover, for each $\nu < \xi$ it holds that $b_\nu=\code\big((c_\nu)_0, (c_\nu)_1, c_\nu\big)$ where $c_\nu=\decode(b_\nu)$.
\end{definition}

We proceede towards the definition of minimal local witness, by defining the notions of $k$-witness, minimal $k$-witness  and $k$-localizer, by induction on $k \in \omega$, $k \geq 3$.

\begin{definition}
We say $\bar c \in \powerset(\omega)^3$ 
\emph{is a $3$-witness} if and only if
\begin{teqpar}[{$(*)_3$}]\label{e.0-order}
\begin{enumerate}[(a)]
\item $\bar c(2)$ almost disjointly  via $\mathcal F$ codes a sequence $\vec b=\la b_\nu : \nu<\xi\ra$.

\item $\vec b$ is a coherent candidate, i.e., $\vec a_\omega \subseteq \vec b$ and for each $\nu < \xi$, letting $c_\nu=\decode(b_\nu)$ it holds that $b_\nu=\code\big((c_\nu)_0, (c_\nu)_1, c_\nu\big)$.
 \item $\bar c(1)$ is subset of $\omega$ such that $c \cap b_\nu$ is infinite if $\nu<\omega$ and finite for all other $\nu<\xi$.

\item $\bar c(0)$ is an element of $[\omega]^\omega$ which is almost disjoint from each $b_\nu$ for $\nu<\xi$;
\end{enumerate} 
\end{teqpar}
\end{definition}

\begin{remark}
Clearly, the sequence $\vec b$ from (a) is intended to be an initial segment of the MAD family under construction.
We ask (b) as a step to ensuring that this is indeed the case.
The reader will notice that in (c) we require that $\bar c(1)$ has the same properties as $c$ in Fact~\ref{f.coding.miller},
and in (d) we require that $\bar c(0)$ has the same properties as $d$ in said fact. 
The reader may think of $\bar c(0)$ as a counterexample to maximality of $\vec b$ which we wish to eliminate at stage $\xi$ of our construction of $\mathcal A$ by adding a `self-coding' element to our MAD family which has infinite intersection with $\bar c(0)$. 
\end{remark}

\begin{lemma}\label{l.sigma-1}
That $\bar c$ is a $3$-witness can be expressed both by $\Sigma_1$ and a $\Pi_1$ formula, each with parameter $\omega_1$.
\end{lemma}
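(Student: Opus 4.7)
The proof will analyze each of the four clauses (a)--(d) in the definition of 3-witness separately and combine them. The crux is a preliminary observation: the statement of clause (a), that $\bar c(2)$ almost disjointly codes some sequence $\vec b$ via $\mathcal F$, is $\Delta_1$ in $\bar c(2)$ with parameter $\omega_1$. The positive direction is $\Sigma_1$ because the set $S \defeq \{\xi<\omega_1 : \bar c(2) \cap f_\xi \text{ is infinite}\}$ is uniquely determined from $\bar c(2)$ by a $\Delta_0$ condition (bounded quantifier over $\omega_1$ and over $\omega$); one existentially witnesses that the associated G\"odel-coded binary relation $S^*$ on $\omega_1$ is well-founded by producing a ranking function $\omega_1 \to \omega_1$, and one existentially witnesses its transitive collapse as a sequence $\vec b$ of length $\le \omega_1 < \omega_2$. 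The negation ``$\bar c(2)$ does not code a sequence'' is also $\Sigma_1$: either $S^*$ has an infinite descending chain $g\colon\omega\to\omega_1$ (an existential witness), or $S^*$ is well-founded (witnessed by a ranking function) but its transitive collapse is exhibited as a set not of the required sequence form.

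Given $\vec b$ supplied by (a), the remaining clauses (b), (c), (d) become bounded conditions on $\vec b$ and $\bar c$: clause (b) quantifies $\nu$ over $\length(\vec b) \le \omega_1$ and only invokes the functions $\code$ and $\decode$, which are recursive in the fixed parameter $\vec a_\omega$ from Convention~\ref{n.a}; clauses (c) and (d) assert finitude or infinitude of intersections $b_\nu \cap \bar c(i)$ for $\nu < \length(\vec b)$, which is bounded over $\omega_1 \times \omega$. Each is therefore $\Delta_0$ with parameter $\omega_1$. Combining, the $\Sigma_1$ form of the lemma is $\exists \vec b\, [\text{(a)}(\bar c(2),\vec b) \wedge \text{(b)}(\vec b) \wedge \text{(c)}(\bar c,\vec b) \wedge \text{(d)}(\bar c,\vec b)]$, which is manifestly $\Sigma_1$ with parameter $\omega_1$.

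For the $\Pi_1$ form I express the negation as $\Sigma_1$: ``$\bar c$ is not a 3-witness'' holds iff either $\bar c(2)$ fails to code any sequence (a $\Sigma_1$ condition by the observation above) or $\bar c(2)$ codes some sequence $\vec b$ and one can exhibit a specific $\nu<\length(\vec b)$ witnessing failure of (b), (c) or (d)---again a $\Sigma_1$ statement, since producing $\vec b$ is $\Sigma_1$ and witnessing failure is $\Delta_0$. Taking the syntactic negation yields the desired $\Pi_1$ formulation. Throughout, the fixed reals $r$ (defining $\mathcal F$ by Convention~\ref{n.r}) and $\vec a_\omega$ (defining $\code,\decode$) appear as absolute constants, so the only variable parameter is $\omega_1$.

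The main obstacle is the bookkeeping for clause (a): one must arrange for both ``codes a sequence'' and its negation to be $\Sigma_1$, which is the standard duality for well-founded versus ill-founded relations on a set (ranking functions on the positive side; infinite descending chains, or a transitive collapse of the wrong shape, on the negative side). Every other conjunct reduces to straightforward bounded quantification once the sequence $\vec b$ is in hand.
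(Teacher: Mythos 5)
Your proposal is correct and follows essentially the same route as the paper: the paper's (very terse) proof likewise observes that clause (a) and its negation are both $\Sigma_1$ with parameter $\omega_1$ (well-foundedness witnessed by a ranking function on one side, and by a descending chain or a wrongly-shaped collapse on the other), that clauses (b)--(d) are $\Delta_1$ in $\vec b$ and $\bar c$, and then combines these exactly as you do. Your write-up simply supplies the bookkeeping the paper leaves to the reader.
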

\begin{proof}
The stamtent in (a) that $\bar c(2)$ almost disjointly  via $\mathcal F$ codes a sequence $\vec b$ of length $<\omega_2$ is easily seen to be a $\Sigma_1$ property of $(\bar c,\vec b)$, allowing $\omega_1$ as a parameter; likewise the negation of this statement.
All the other statements (b)---(d) are obviously $\Delta_1$ in the parameters $\vec b$ and $\bar c$.
The lemma follows easily.
\end{proof}

We continue with the definition of minimal $3$-witness for a coherent candidate $\vec b$.

\begin{definition}
For any $3$-witness $\bar c \in \powerset(\omega)^3$, we say \emph{$\bar c$ is a witness to $\vec b$} with $\bar b$ the sequence coded by $\bar c(2)$ as in (a) above. We also write $\vec b(\bar c)$ for this sequence.
Write $\prec^3$ for the lexicographic ordering on $\powerset(\omega)^3$ induced by $\prec$;
we say $\bar c \in \powerset(\omega)^3$ is a \emph{minimal $3$-witness} if it is $\prec^3$-minimal among all $3$-witnesses to the same sequence $\vec b$.
\end{definition}

We now give the crucial definition of a \emph{localizer}---a real which ensures that witnesses can be recognized from a local property.

\begin{definition}
Given $\bar c  \in \powerset(\omega)^3$ (a putative $3$-witness) we say $c \in \powerset(\omega)$ is  
\emph{is a $3$-localizer for $\bar c$} if and only if the following holds:
\begin{teqpar}[{$(*)_4$}]\label{e.1-order}
For any suitable model $N$ with $\{\bar c, c, \vec a_\omega\}\subseteq N$, the following holds in $N$:
There is a transitive model $M$ of $\ZFminus$ such that  $M\vDash\Phi_{\text{is}}(c_{\prec})$, $\{\omega_1,\bar c,\vec a_\omega\}\subseteq M$, and 
\begin{enumerate}[(a)]
\item $M\vDash$``$\bar c$ is a minimal $3$-witness''.
\item Writing $\vec b(\bar c)$ as $\la b_\nu : \nu<\xi\ra$, it holds that $\vec b(\bar c) \in M$ and for each $\nu<\xi$, 
 $M\Vdash$``$\bar c_\nu\res 3$ is a minimal $3$-witness'', where
 $\bar c_\nu=\seqdc{\decode(b_\nu)}$.

\item $c \notin M$.
\end{enumerate} 
\end{teqpar}
\end{definition}

\begin{remark}
Note that ``$\bar c_\nu\res 3$ is a minimal $3$-witness'' is a statement which uses $\vec a_\omega$ as a parameter.
We ask Item (c) above because this will allow us to show that if $a$ has a minimal local witness and this witness codes $\vec b=\la b_\nu : \nu<\xi\ra$, then for each $\nu < \xi$ it must hold that $b_\nu \prec a$ (of course this must remain but a vague promise until we have given the complete definition of minimal local witness).
\end{remark}

We need the following crucial lemmas:
\begin{lemma}\label{l.global-local}
There exists a $3$-localizer $c$ for any minimal $3$-witness $\bar c \in \powerset(\omega)^3$.
\end{lemma}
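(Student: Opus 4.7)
\emph{Proof plan.} The plan is to apply Lemma~\ref{l.witnessing} to a $\Sigma_1$ formula asserting the existence of a transitive model witnessing the conditions imposed on $M$ in $(*)_4$, and then to upgrade the resulting localizing real to one of $\prec$-rank at least $\omega_1$ so as to secure clause (c).

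I would first let $y\in\powerset(\omega)$ recursively encode the triple $(\bar c,\vec a_\omega,c_\prec)$, and define $\phi(y,\omega_1)$ to be the formula ``there exists a transitive $\in$-model $M'$ of $\ZFminus$ with $\{y,\omega_1\}\subseteq M'$ such that $M'\vDash\Phi_{\textup{is}}(c_\prec)$ and both (a) and (b) of $(*)_4$ hold in $M'$''. By Lemma~\ref{l.sigma-1}, being a $3$-witness is $\Delta_1$ with parameter $\omega_1$, and in particular minimality among $3$-witnesses is absolute between transitive models whose real parts form a $\prec$-initial segment. To verify $\phi(y,\omega_1)$ holds in $\Ve$, I would take the transitive collapse $M'$ of an elementary submodel $X\prec\HSize(\omega_2)$ of cardinality $\omega_1$ with $\omega_1\subseteq X$ and $\{y,c_\prec,\vec b(\bar c)\}\subseteq X$, closed under the operation $a\mapsto\{b:b\prec a\}$; the real part of $M'$ is then a $\prec$-initial segment, so $M'\vDash\Phi_{\textup{is}}(c_\prec)$, and by elementarity and absoluteness $M'$ satisfies (a), (b).

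Next, I would apply Lemma~\ref{l.witnessing} to $\phi$ and $y$ to produce a localizing real $c_0$, and then pick any $c\geq_T c_0$ with $\prec$-rank $\geq\omega_1$. Such $c$ exists because there are $2^{\aleph_0}=\aleph_2$ reals above $c_0$ in Turing reducibility while only $\aleph_1$ reals have $\prec$-rank $<\omega_1$. To verify $c$ is a $3$-localizer, fix a suitable model $N$ with $\{\bar c,c,\vec a_\omega\}\subseteq N$; then $y,c_\prec,c_0$ are also in $N$, and by localization $N\vDash\phi(y,(\omega_1)^N)$. This supplies inside $N$ a transitive $\ZFminus$-model $M$ containing $(\omega_1)^N,\bar c,\vec a_\omega$ with $\Phi_{\textup{is}}(c_\prec)\land(\text{a})\land(\text{b})$. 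Since $M$ arises as an element of the transitive model $M^*\in N$ delivered by Lemma~\ref{l.witnessing}, it is countable in $\Ve$, so $M\cap\powerset(\omega)$ is a countable $\prec$-initial segment, and every one of its elements has $\prec$-rank $<\omega_1$; as $c$ has $\prec$-rank $\geq\omega_1$, $c\notin M$, establishing (c).

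The main obstacle I anticipate is justifying clause (b) in the ambient $\Ve$: the argument requires each $\bar c_\nu\res 3$ to be a minimal $3$-witness in $\Ve$, not merely in some local model. Otherwise, any $\prec^3$-smaller $3$-witness $\bar c'$ would automatically lie in every $\prec$-initial segment containing $\bar c_\nu\res 3$, and thus in every transitive $M'$ with $\bar c\in M'$ satisfying $\Phi_{\textup{is}}(c_\prec)$, so no such $M'$ could believe $\bar c_\nu\res 3$ to be minimal. This inductive coherence must be maintained by the subsequent construction of $\mathcal A$.
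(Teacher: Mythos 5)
Your proposal is correct and follows essentially the same route as the paper: both reduce the lemma to the localization machinery of Lemma~\ref{l.witnessing}, applied to the $\Sigma_1$ statement (with parameter $\omega_1$) asserting the existence of a transitive $\ZFminus$-model of $\Phi_{\textup{is}}(c_\prec)$ that verifies minimality. The one genuine difference is clause (c) of \ref{e.1-order}: the paper obtains $c\notin M$ for free by taking $c$ to code (almost disjointly via $\mathcal F$) the relation $\in\res M$ itself, so that $c\in M$ would contradict foundation, whereas you use Lemma~\ref{l.witnessing} as a black box and then pass to a Turing-above real of $\prec$-rank $\geq\omega_1$; your variant works (suitable models are closed under Turing reducibility, and $M\cap\powerset(\omega)$ is a countable $\prec$-initial segment), at the small cost of an extra cardinality argument the paper avoids. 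Your closing caveat about clause (b) is also well taken: for an arbitrary minimal $3$-witness the coherent-candidate condition does not by itself guarantee that each $\bar c_\nu\res 3$ is a minimal $3$-witness, so the lemma is really only applied to, and only needs to hold for, the witnesses arising in the recursive construction of $\mathcal A$ — a point the paper's own proof passes over in silence.
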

\begin{proof}
Suppose $\bar c  \in \powerset(\omega)^3$ is a minimal $3$-witness.
Fix a model $M$ of $\ZF^-$ such that $\{\omega_1, \bar c\} \subseteq M$ and so that $M\vDash \Phi_{\text{is}}(c_{\prec})$.
Then statements such as ``$c$ codes almost disjointly via $\mathcal F$ the sequence $\vec b$ of length $<\omega_1$'' (cf.\ Definition~\ref{d.basic.coding} above) are absolute for $M$. 
So the property of being a $3$-witness is absolute for $M$, since it is $\Delta_1$ (allowing $\omega_1$ as a parameter). 
Since $\powerset(\omega)\cap M$ is an initial segment of $\prec$ (cf.\ Fact~\ref{f.wo} as well as~\ref{n.wo}), so is the property of being a minimal $3$-witness.

Now as in the proof of Lemma~\ref{l.witnessing}, find $c$ coding almost disjointly via $\mathcal F$ a subset of $\omega_1$ which is isomorphic to $\in\res M$ and such that for any suitable model $N$, if $c,\bar c \in N$ then it holds in $N$ that $c$ codes a model $M^*$ which witnesses the $\Sigma_1$ statement expressing $\bar c$ is a minimal $3$-witness. 
\end{proof}

\begin{lemma}\label{l.local-global}
Suppose $\bar c \in \powerset(\omega)^3$.
If there exists a $3$-localizer for $\bar c$, then $\bar c$ is a minimal $3$-witness.
\end{lemma}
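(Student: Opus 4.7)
The plan is to manufacture, inside $V$, a transitive model $M^*$ containing the true $\omega_1$ in which $\bar c$ is a minimal $3$-witness, and then transfer minimality down to $V$ using the $\Delta_1$-absoluteness from Lemma~\ref{l.sigma-1} together with the initial-segment property $\Phi_{\text{is}}(c_\prec)$ that $M^*$ will satisfy.

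First I would produce a suitable model where the localizer hypothesis $(*)_4$ can be invoked. Pick a large regular $\theta$ and a countable elementary submodel $N\prec H(\theta)$ containing $\{\bar c,c,\vec a_\omega,r,\omega_1,c_\prec\}$, and let $\pi\colon N\to\bar N$ be the Mostowski collapse. Reals and $\omega$-sequences of reals are fixed by $\pi$, so $\bar c,c,\vec a_\omega,r\in\bar N$, while $\pi(\omega_1)=(\omega_1)^{\bar N}$; by elementarity $\bar N\vDash\ZF^-+$``$\omega_1$ exists'', so $\bar N$ is suitable. Applying the hypothesis that $c$ is a $3$-localizer for $\bar c$ at $\bar N$ yields, inside $\bar N$, a transitive model $M$ with $\{(\omega_1)^{\bar N},\bar c,\vec a_\omega\}\subseteq M$, $M\vDash\Phi_{\text{is}}(c_\prec)$, and $M\vDash$``$\bar c$ is a minimal $3$-witness''.

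Next I would lift this witness to $V$. The existence of such an $M$ is a $\Sigma_1$ statement in the parameters $\bar c,\vec a_\omega,c_\prec,\omega_1$; pulling back along $\pi$ turns the assertion in $\bar N$ into the corresponding statement in $N$ with the true $\omega_1$ in place of $(\omega_1)^{\bar N}$, and then elementarity $N\prec H(\theta)$ hands us a transitive $M^*\in V$ with $\{\omega_1,\bar c,\vec a_\omega\}\subseteq M^*$, $M^*\vDash\Phi_{\text{is}}(c_\prec)$, and $M^*\vDash$``$\bar c$ is a minimal $3$-witness''. (Because $\Phi_{\text{is}}(c_\prec)$ pins down $M^*\cap\powerset(\omega)$ as the $\prec$-initial segment determined by $c_\prec$, $M^*$ is forced to interpret $\omega_1$ as the genuine $\omega_1$.)

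Finally I would transfer minimality. By Fact~\ref{f.wo}, $M^*\cap\powerset(\omega)$ is a $\prec$-initial segment of $\powerset(\omega)$. By Lemma~\ref{l.sigma-1}, ``being a $3$-witness'' is $\Delta_1$ in parameter $\omega_1$, hence absolute between $M^*$ and $V$; in particular $\bar c$ is a $3$-witness in $V$ and $\vec b(\bar c)$ is computed identically in both. If there were some $\bar c'\prec^3\bar c$ that is a $3$-witness to $\vec b(\bar c)$ in $V$, then each coordinate $\bar c'(i)\preceq\bar c(i)$ would lie in the initial segment $M^*\cap\powerset(\omega)$, so $\bar c'\in M^*$; $\Delta_1$-absoluteness would then make $\bar c'$ a $3$-witness to $\vec b(\bar c)$ inside $M^*$, contradicting minimality of $\bar c$ there. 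The main technical hurdle is the Mostowski-collapse-plus-elementarity step that converts an existence assertion inside the suitable $\bar N$ (where the parameter is $(\omega_1)^{\bar N}$) into a genuine witnessing model in $V$ with the true $\omega_1$, which is precisely what the $(\Sigma_1,\powerset(\omega_1))$-definable initial-segment property is designed to make possible.
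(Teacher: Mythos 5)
Your overall strategy is the same as the paper's: take a countable elementary submodel containing the localizer and the relevant parameters, collapse it to a suitable model, invoke $(*)_4$ there, pull the witnessing transitive model $M^*$ back up to $V$ by elementarity of the $\Sigma_1$ existence statement, and then use $M^*\vDash\Phi_{\text{is}}(c_\prec)$ together with the $\Delta_1$-absoluteness of ``$3$-witness'' to transfer minimality. (The paper reflects into $\eL_{\omega_2}[c,\bar c,\vec a_\omega]$ rather than $H(\theta)$; that difference is immaterial.)

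However, your final step contains a false inference. You claim that $\bar c'\prec^3\bar c$ implies $\bar c'(i)\preceq\bar c(i)$ for \emph{each} coordinate $i$, and hence $\bar c'\in M^*$. Since $\prec^3$ is the \emph{lexicographic} order induced by $\prec$, this is wrong: $\bar c'\prec^3\bar c$ only forces $\bar c'(i)\prec\bar c(i)$ at the first coordinate of disagreement, while the later coordinates of $\bar c'$ may be arbitrarily large in $\prec$ and so need not lie in the initial segment $M^*\cap\powerset(\omega)$. The gap is easy to close because, for a fixed sequence $\vec b$, the four clauses of $(*)_3$ constrain the three coordinates independently (clause (d) mentions only $\bar c(0)$ and $\vec b$, clause (c) only $\bar c(1)$ and $\vec b$, clauses (a),(b) only $\bar c(2)$ and $\vec b$); hence the set of $3$-witnesses to $\vec b$ is a product, and the $\prec^3$-least one is the coordinate-wise $\prec$-least one. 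So given $\bar c'\prec^3\bar c$ witnessing $\vec b(\bar c)$ in $V$, let $i$ be the first coordinate where they differ and replace every later coordinate of $\bar c'$ by the corresponding coordinate of $\bar c$; the result $\bar c''$ is still a $3$-witness to $\vec b(\bar c)$, satisfies $\bar c''\prec^3\bar c$, and \emph{does} have all coordinates in $M^*\cap\powerset(\omega)$, whence $\Delta_1$-absoluteness contradicts $M^*\vDash$``$\bar c$ is a minimal $3$-witness''. With that repair your argument agrees with the paper's.
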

\begin{proof}
Suppose $c$ is a $3$-localizer for $\bar c$. 
Let $\bar N$ be a countable elementary submodel of $\eL_{\omega_2}[c,\bar c, \vec a_\omega]$ with $\{\omega_1, c,\bar c,\vec a_\omega\}\subseteq N$ and let $N$ be the transitive collapse of $N$. 
Then $N$ is suitable, and so by \ref{e.1-order} the following holds in $N$:
There is a transitive model $M$ of $\ZFminus$ such that  $M\vDash\Phi_{\text{is}}(c_{\prec})$, $\{(\omega_1)^N,\bar c,\vec a_\omega\}\subseteq M$, and 
\begin{enumerate}[(a)]
\item $M\vDash$``$\bar c$ is a minimal $3$-witness''.
\item $\bar c$ codes $\vec b=\la b_\nu : \nu<\xi\ra$, $\vec b\in M$ and for each $\nu<\xi$, 
 $M\Vdash$``$\bar c_\nu\res 3$ is a minimal $3$-witness'', where
 $\bar c_\nu=\seqdc{\decode(b_\nu)}$.
\end{enumerate} 
By elementarity, there exists such a model $M$ in $\eL_{\omega_2}[c,\bar c, \vec a_\omega]$ with all of the above properties, where $(\omega_1)^N$ is replaced by $\omega_1$.
Since $M\vDash\Phi_{\text{is}}(c_{\prec})$ and so $\powerset(\omega)\cap M$ is an initial segment of $\prec$ and since being a $3$-witness is absolute for transitive models, also (a) above is absolute for $M$.
Hence $\bar c$ is a minimal $3$-witness, finishing the proof.

We point out that by (b) we also have that for each $\nu<\xi$, $\seqdc{\decode(b_\nu)}$ is a $3$-witness.
\end{proof}

Thus we have shown that $\bar c$ is a minimal $3$-witness if and only if there exists a $3$-localizer for $\bar c$.
Of course, there may be more than one $3$-localizer for a given $3$-minimal witness.

\begin{definition}
We say $\bar c \in \powerset(\omega)^4$ is a \emph{minimal $4$-witness} if and only if
$\bar c(3)$ is the $\prec$-least localizer for $\bar c\res 3$.
%Again we write $\vec b(\bar c)$ for the unique sequence coded by $\bar c(1)$ whenever $\bar c$ is a witness.
\end{definition}

We now continue the definition of minimal $k$-witness for by induction on $k$, following the template given by the definition for $k=4$, except that there is no longer any need to require (c). 

\begin{definition}
Suppose  
we have already defined what it means to be a minimal $k$-witness for elements of $\powerset(\omega)^{k}$.
Given $\bar c  \in \powerset(\omega)^{k}$ (a putative $k$-witness) we say $c \in \powerset(\omega)$ is  
\emph{is a $k$-localizer for $\bar c$} if and only if the following holds:
\begin{teqpar}[{$(*)_{k}$}]\label{e.k+1-order}
For any suitable model $N$ with $\{\bar c, c, \vec a_\omega\}\subseteq N$, the following holds in $N$:
There is a transitive model $M$ of $\ZFminus$ such that  $M\vDash\Phi_{\text{is}}(c_{\prec})$, $\{\omega_1,\bar c,\vec a_\omega\}\subseteq M$, and 
\begin{enumerate}[(a)]
\item $M\vDash$``$\bar c$ is a minimal $k$-witness''.
\item $\vec b(\bar c)\in M$ and writing $\vec b(\bar c)=\la b_\nu : \nu<\xi\ra$, for each $\nu<\xi$ it holds that 
$M\vDash$``$\bar c_\nu \res k$ is a minimal $k$-witness'', where $\bar c_\nu=\seqdc{\decode(b_\nu)}$.
\end{enumerate} 
\end{teqpar}
Moreover, we say $\bar c \in \powerset(\omega)^{k+1}$ is a \emph{minimal $(k+1)$-witness} if and only if
$\bar c(k)$ is the $\prec$-least $k$-localizer for $\bar c\res k$.

\medskip

Finally, we say $\bar c \in \powerset(\omega)^\omega$ is a \emph{minimal local witness} if and only if
\begin{teqpar}[$(**)$]\label{e.local.witness}
for each $k \in \omega\setminus 3$, $\bar c(k+1)$ is a $k$-localizer for $\bar c \res k$
\end{teqpar}
and we say $c\in\powerset(\omega)$  is a \emph{minimal local witness} if and only if $\seqdc{c}$ is a minimal local witness.

Given arbitrary $\bar c \in \powerset(\omega)^{\leq\omega}$ let us say $\bar c$ \emph{codes} $\vec b=\vec b(\bar c)$ if $\bar c(2)$ almost disjointly via $\mathcal F$ codes the sequence $\vec b$---just as in (b) of \ref{e.0-order}. 
\end{definition}
 
Just as before for $k=3$ we have the following crucial lemma:
\begin{lemma}\label{l.local-global.k}
Suppose $k\in\omega\setminus3$ and $\bar c \in \powerset(\omega)^k$.
There exists a $k$-localizer for $\bar c$ if and only if $\bar c$ is a minimal $k$-witness.
\end{lemma}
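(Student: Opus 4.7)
We induct on $k \geq 3$. The base case $k = 3$ is exactly the combination of Lemmas~\ref{l.global-local} and~\ref{l.local-global}. For the step $k \mapsto k+1$, the argument transplants those two proofs verbatim, substituting ``$(k+1)$-witness'' for ``$3$-witness'' and ``$(k+1)$-localizer'' for ``$3$-localizer'' throughout; the one genuinely new ingredient is a parallel inductive absoluteness claim, which I establish alongside the main induction.

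\paragraph{Absoluteness claim.} For each $j \geq 3$ and every transitive model $M \vDash \ZFminus + \Phi_{\text{is}}(c_{\prec})$ containing $\omega_1, \vec a_\omega, \bar d$, the statement ``$\bar d$ is a minimal $j$-witness'' is absolute between $M$ and $V$. For $j = 3$ this is embedded in Lemma~\ref{l.global-local}: being a $3$-witness is $\Delta_1$ by Lemma~\ref{l.sigma-1}, and $\prec$-minimality transfers because $\Phi_{\text{is}}(c_{\prec})$ forces $\powerset(\omega) \cap M$ to be a $\prec$-initial segment. For the step $j \mapsto j+1$, unwind the definition to ``$\bar d(j)$ is the $\prec$-least $j$-localizer for $\bar d \res j$''. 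The $\prec$-least clause is absolute (any $\prec$-smaller element in $V$ already lies in $M$). Downward absoluteness of ``is a $j$-localizer'' is immediate, as $M$'s quantifier over suitable models is a restriction of $V$'s; upward absoluteness combines the inductive hypothesis with the already-proven case of the main lemma at level $j$ to pin down $\bar d(j)$ as the genuine $V$-level $\prec$-least localizer for $\bar d \res j$.

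\paragraph{Using the claim, and main obstacle.} Given the absoluteness claim, both directions of the step follow the base-case template. For ``minimal $\Rightarrow$ localizer exists'': fix a transitive $M_0 \vDash \ZFminus + \Phi_{\text{is}}(c_{\prec})$ containing $\omega_1, \bar c, \vec a_\omega$; absoluteness yields $M_0 \vDash$ ``$\bar c$ is a minimal $(k+1)$-witness'' together with the analogous clause for each $\bar c_\nu \res (k+1)$ (for $\nu < \xi$, where $\vec b(\bar c) = \la b_\nu : \nu < \xi\ra$); then, mimicking Lemma~\ref{l.global-local}, encode $M_0$ almost disjointly via $\mathcal F$ into a real $c$ by the technique of Lemma~\ref{l.witnessing}, obtaining a $(k+1)$-localizer. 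For ``localizer $\Rightarrow$ minimal'': take a countable $\bar N \prec \eL_{\omega_2}[c, \bar c, \vec a_\omega]$ containing $\{\omega_1, c, \bar c, \vec a_\omega\}$, collapse to a suitable $N$, apply \ref{e.k+1-order} inside $N$ to obtain a witnessing $M \in N$, lift $M$ via elementarity into $\eL_{\omega_2}[c, \bar c, \vec a_\omega]$, and invoke the absoluteness claim, as in Lemma~\ref{l.local-global}. The principal obstacle is precisely the upward half of absoluteness for the $\Pi_1$-like statement ``is a $j$-localizer'' (quantifying universally over suitable models): a $V$-suitable $N$ outside $M$ could a priori invalidate the clause; ruling this out is the step that genuinely requires the inductive main lemma at level $j$ and the $\prec$-initial-segment property used in tandem.
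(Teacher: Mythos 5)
Your overall architecture --- induction on $k$, with the base case given by Lemmas~\ref{l.global-local} and~\ref{l.local-global} and the inductive step obtained by rerunning those two proofs at level $k+1$ --- is exactly what the paper intends; its own proof consists of the single sentence that the claim ``is shown precisely as Lemmas~\ref{l.local-global} and \ref{l.global-local}.'' You also correctly isolate the one point where the $k=3$ argument does not transfer verbatim: for $k=3$ absoluteness of ``minimal $3$-witness'' comes for free from Lemma~\ref{l.sigma-1} (being a $3$-witness is $\Delta_1$ in the parameter $\omega_1$) together with the $\prec$-initial-segment property of the models $M$, whereas for $k\geq 4$ the clause ``$\bar d(j)$ is the $\prec$-least $j$-localizer for $\bar d\res j$'' is no longer $\Delta_1$, so its absoluteness between the transitive models $M$ occurring in $(*)_k$ and $V$ genuinely has to be argued. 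Identifying and stating that auxiliary absoluteness claim is the right move.

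Your proposed proof of that claim is, however, the weak point. For the upward direction of ``is a $j$-localizer'' you invoke ``the inductive hypothesis together with the already-proven main lemma at level $j$,'' but this does not close. Applying the main lemma \emph{inside} $M$ is not licensed: $M$ is merely a transitive \ZFminus-model satisfying $\Phi_{\text{is}}(c_{\prec})$, and the proofs of Lemmas~\ref{l.global-local} and~\ref{l.witnessing} use $\MA_{\aleph_1}$ and almost disjoint coding via $\mathcal F$, which $M$ need not support. Applying it in $V$ only yields that \emph{some} $j$-localizer for $\bar d\res j$ exists there; it does not certify that the particular real $\bar d(j)$ is one, which is what you need, and comparing $\bar d(j)$ with the true $\prec$-least $V$-localizer presupposes the very absoluteness you are trying to prove. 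The standard repair is Shoenfield absoluteness: ``$c$ is a $j$-localizer for $\bar d$'' is a $\Pi^1_2$ statement in its real parameters (this is precisely what Claim~\ref{c.local.witness.pi} records, and it holds purely syntactically, independently of the complexity of ``minimal $j$-witness'' as a statement about $V$, since that phrase only occurs relativized to the countable models $N$), and every model $M$ occurring in $(*)_k$ contains $\omega_1$ as an element, hence all countable ordinals; therefore $\Sigma^1_2$ and $\Pi^1_2$ statements about members of $M$ are absolute between $M$ and $V$. Combined with the $\prec$-initial-segment property (so that the universal quantifier over $d\prec\bar d(j)$ in the minimality clause ranges over the same reals in $M$ as in $V$, and $\neg\Theta_j$ is $\Sigma^1_2$, hence also absolute), this yields your absoluteness claim outright, after which both directions of the inductive step go through exactly as in the base case, as you describe.
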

\begin{proof}
This is shown precisely as Lemmas~\ref{l.local-global} and \ref{l.global-local} above.
\end{proof}

We need one last observation.
\begin{lemma}\label{l.unique.local}
For each sequence $\vec b=\la b_\xi : \xi < \nu\ra$, there is at most one minimal local witness $\bar c \in \powerset(\omega)^\omega$ coding $\vec b$. Likewise, if two sequences $\bar c$ and $\bar c'$ are minimal local witnesses and $\bar c(2)=\bar c'(2)$, then $\bar c=\bar c'$. 
\end{lemma}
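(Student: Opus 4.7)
The plan is to show that once $\bar c(2)$ is fixed, every remaining coordinate of a minimal local witness $\bar c$ is uniquely forced. Both assertions of the lemma then fall out immediately: the second directly, and the first because a minimal local witness coding $\vec b$ must have $\bar c(2)$ equal to the third coordinate of the (unique) minimal $3$-witness to $\vec b$.

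First I would unpack the outer definition of \emph{minimal local witness} using Lemma~\ref{l.local-global.k}. The condition ``$\bar c(k+1)$ is a $k$-localizer for $\bar c\res k$'', required for every $k\in\omega\setminus 3$, is equivalent (by that lemma) to ``$\bar c\res k$ is a minimal $k$-witness''. Applied at $k=3$ this says $\bar c\res 3$ is a minimal $3$-witness. Applied at each $k\geq 4$ and then unpacked one step further via the definition of minimal $k$-witness, it says $\bar c(k-1)$ is the $\prec$-least $(k-1)$-localizer for $\bar c\res (k-1)$. Reindexing, this yields: for every $j\geq 3$, the coordinate $\bar c(j)$ is the $\prec$-least $j$-localizer for $\bar c\res j$, an object that is uniquely determined by $\bar c\res j$ (when it exists) since $\prec$ is a well-order.

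Next I would observe that $\bar c\res 3$ is uniquely determined by $\vec b(\bar c)$. Indeed, $\prec^3$ is a well-order on $\powerset(\omega)^3$, so there is a unique $\prec^3$-least $3$-witness to any given $\vec b$; hence the minimal $3$-witness is unique per $\vec b$. Moreover, $\vec b(\bar c)$ depends only on $\bar c(2)$, since almost disjoint coding via $\mathcal F$ recovers a unique $S\subseteq\omega_1$ and therefore a unique sequence $\vec b$ from $\bar c(2)$. Combining with the previous paragraph, a routine induction on $j\geq 3$ shows that $\bar c(j)$ is determined by $\bar c\res 3$, and hence the full sequence $\bar c$ is determined by $\bar c(2)$.

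From this, the lemma follows. If $\bar c,\bar c'$ are minimal local witnesses with $\bar c(2)=\bar c'(2)$, then $\vec b(\bar c)=\vec b(\bar c')$, so $\bar c\res 3=\bar c'\res 3$ (both being the unique minimal $3$-witness to the same $\vec b$), and then inductively $\bar c(j)=\bar c'(j)$ for all $j\geq 3$; this proves the second statement. The first statement reduces to the second: if two minimal local witnesses both code the same $\vec b$, the unique minimal $3$-witness to $\vec b$ pins down their common value of $\bar c(2)$. The main obstacle, to the extent there is one, is purely notational---keeping straight the off-by-one shifts between ``$\bar c(k+1)$ is a $k$-localizer'' in the outer definition and ``$\bar c(k-1)$ is the $\prec$-least $(k-1)$-localizer'' arising from the definition of minimal $k$-witness---after which the argument is a transparent unwinding of the layered definitions together with Lemma~\ref{l.local-global.k} and the well-foundedness of $\prec$.
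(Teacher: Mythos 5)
Your proposal is correct and follows essentially the same route as the paper: use Lemma~\ref{l.local-global.k} to see that $\bar c\res 3$ must be the (unique, by well-orderedness of $\prec^3$) minimal $3$-witness to $\vec b$, then argue inductively that each later coordinate is the $\prec$-least localizer for the preceding initial segment and hence uniquely determined, with the two assertions of the lemma interchanging roles via the observation that $\bar c(2)$ determines $\vec b(\bar c)$. Your explicit flagging of the off-by-one indexing in the definition of minimal local witness is apt, as the paper's own statement of that definition is not fully consistent with how it is used in the proof, but this does not affect the substance of the argument.
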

\begin{proof}
Suppose $\bar c$ and $\bar c'$ are minimal local witnesses coding $\vec b$.
Since $\bar c(3)$ is a $3$-localizer to $\bar c\res 3$, by Lemma~\ref{l.local-global} the latter is a minimal witness to $\vec b$. The same holds for $\bar c'$. But obviously, there can only be one minimal witness to $\vec b$, so $\bar c\res 3 = \bar c'\res 3$.
But since $\bar c(4)$ is a $4$-localizer for $\bar c\res 4$, $\bar c(3)$ is the $\prec$-least $3$-localizer by Lemma~\ref{l.local-global.k}. 
Since the same holds for $\bar c'(4)$ we have $\bar c(3)=\bar c'(3)$. Continue by induction to obtain $\bar c = \bar c'$.
The second statement follows, since if $\bar c(2)=\bar c'(2)$, also $\vec b(\bar c)=\vec b(\bar c')$. 
\end{proof}

We are now ready to begin the proof.

\begin{proof}[Proof of Theorems~\ref{t.main} and~\ref{t.better}] 
As we have stated earlier, we shall inductively construct a sequence $\la a_\nu : \nu < \omega_2\ra$ such that $\mathcal A = \{a_\nu : \nu < \omega_2\}$ will be a $\mathbf{\Pi}^1_2$ MAD family.
For the first $\omega$ elements of $\la a_\nu : \nu < \omega_2\ra$ take the sequence $\vec a_\omega = \la a_k : k\in\omega\ra$ fixed in~\ref{n.a} (since our coding functions $\code$ and $\decode$ use $\vec a_\omega$).
Fix $c_{\mathcal A}\in\powerset(\omega)$ from which both $\vec a_\omega$ and $c_{\prec}$ are computable; in the end $\mathcal A$ will be $\Pi^1_2(c_{\mathcal A})$.

Suppose we have already constructed $\la a_\nu : \nu < \xi\ra$ (where $\omega \leq \xi < \omega_1$) and assume as induction hypothesis that
for each $\nu<\xi$, letting $c_\nu=\decode(a_\nu)$ and $\bar c_\nu=\seqdc{c_\nu}$ we have that 
$a_\nu=\code(\bar c_\nu(0),\bar c_\nu(1), c_\nu)$ and $\bar c_\nu$ (or equivalently, $c_\nu$) is a minimal local witness. 
Also, let us write $d_\nu = \bar c_\nu(0)$.

Write $\mathcal A_\xi =\{ a_\nu : \nu < \xi\}$.
We will now define $a_\xi$. 
First find $d_\xi$ such that 
\begin{eqpar}\label{e.least}
$d_\xi$ is the $\prec$-least element of $[\omega]^\omega$ which is almost disjoint from every element of $\mathcal A_\xi$.
\end{eqpar}
Such $d_\xi$ exists since $\BPFA$ implies that  there is no MAD family of size less than $\omega_2$.

We now find a minimal local witness $\bar c_\xi\in \powerset(\omega)^\omega$ to $\la a_\nu : \nu < \xi\ra$. 
We shall define $\bar c_\xi \res k$ by recursion on $k>0$. 
\begin{itemize}
\item Of course, we let $\bar c_\xi(0)=d_\xi$.
\item By Fact~\ref{f.adc} there exists $c\in[\omega]^\omega$ such that $\{\nu < \xi : \lvert c \cap a_\nu \neq \omega\} = \xi\setminus\omega$. We let $\bar c_\xi(1)$ be the $\prec$-least such $c$. 
\item Also by Fact~\ref{f.adc}, there exists a subset of $\omega$ which almost disjointly via $\mathcal F$ codes $\la a_\nu : \nu < \xi\ra$;
let $\bar c(2)$ be the $\prec$-least such subset, noting that this makes $\bar c \res 3$ a minimal witness.
\end{itemize}
Just as in the proof of Lemma~\ref{l.witnessing}, there is $c\in\powerset(\omega)$ such that
\begin{eqpar}\label{e.codes}
In any suitable model $N$ such that $\{c,p_{\mathcal A}\} \subseteq N$, the following holds: Via $\mathcal F$, $c$ almost disjointly codes a well-founded model $M^0_\xi$ such that $\mathcal A_\xi\cup\{c_{\mathcal A},a^0_\xi\} \subseteq M^0$, $c \notin M^0$, $M^0 \vDash \Phi(r_\prec)$,  
and the $\Sigma_1$ formula expressing \eqref{e.least} holds in $M^0$.
\end{eqpar}
In other words, there exists a witness $c$ for $\bar c_\xi\res 3$. Let $\bar c_\xi(3)$ be the $\prec$-least such witness.
Continue recursively in the same manner for $k> 3$:
Suppose that $\bar c_\xi \res k$ is a  minimal witness.
Find a transitive model $M^k_\xi$ such that $M^{k-1}_\xi\cup\{\bar c_\xi \res k\} \subseteq M^k_\xi$, $M^k_\xi \vDash \Phi(r_\prec)$,  
and the $\Sigma_1$ formula expressing that $\bar c_\xi \res k$ is a minimal witness holds in $M^k_\xi$.
Then as above use Lemma~\ref{l.witnessing} to find a witness for $\bar c_\xi\res k$, and let $\bar c_\xi(k)$ be the least such witness. This finishes the recursive construction of $\bar c_\xi$.

Finally, we write $c_\xi$ for the element of $\powerset(\omega)$ such that $\seqdc{c_\xi}=\bar c_\xi$ and 
define
\[
a_\xi = \code(\bar c_\xi(0),\bar c_\xi(1),c_\xi), 
\]
finishing the recursive definition of $\la a_\xi : \xi < \omega_2\ra$.
Write $\mathcal A = \{a_\xi : \xi < \omega_2\}$. Clearly, by choice of $d_\xi$, $a_\xi$ and by the properties of the function $\code$ from Fact~\ref{f.coding.miller}, this is an almost disjoint family. 

\medskip

It is not hard see that $\mathcal A$ is maximal. We first point out the following simple observation:
\begin{claim}\label{c.prec}
Whenever $\nu < \xi < \omega_2$, $d_\nu \prec d_\xi$. 
\end{claim}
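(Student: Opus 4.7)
The plan is to combine two simple observations: a monotonicity property of the defining condition for $d_\xi$, and the fact that the coding function $\code$ forces $a_\nu$ to have large intersection with $d_\nu$.

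First I would show $d_\nu \preceq d_\xi$. Since $\nu < \xi$, we have $\mathcal A_\nu \subseteq \mathcal A_\xi$, so any set which is almost disjoint from every element of $\mathcal A_\xi$ is automatically almost disjoint from every element of $\mathcal A_\nu$. In particular, $d_\xi$ lies in the set of candidates defining $d_\nu$ via \eqref{e.least}, so $\prec$-minimality of $d_\nu$ among such candidates yields $d_\nu \preceq d_\xi$.

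Next I would rule out equality by appealing to the construction of $a_\nu$. By induction hypothesis, $a_\nu = \code(\bar c_\nu(0), \bar c_\nu(1), c_\nu) = \code(d_\nu, \bar c_\nu(1), c_\nu)$. Inspecting the explicit definition of $\code$ given in the proof of Fact~\ref{f.coding.miller}, we have $d_\nu \subseteq \code(d_\nu, \bar c_\nu(1), c_\nu) = a_\nu$; in particular $a_\nu \cap d_\nu = d_\nu$ is infinite. Hence $d_\nu$ is \emph{not} almost disjoint from the element $a_\nu \in \mathcal A_\xi$, so $d_\nu$ fails to satisfy the condition defining $d_\xi$ in \eqref{e.least}. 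In particular $d_\nu \neq d_\xi$, and combined with the previous paragraph this gives $d_\nu \prec d_\xi$, as desired.

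There is no real obstacle here; both steps are immediate consequences of the definitions and of how $\code$ was set up in Fact~\ref{f.coding.miller}. The only small point to double-check is that $d_\nu \subseteq a_\nu$ (or at least has infinite intersection with $a_\nu$), which is built into the coding construction.
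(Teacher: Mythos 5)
Your proposal is correct and is essentially the paper's own argument: both rest on the monotonicity $\mathcal A_\nu \subseteq \mathcal A_\xi$ (giving $d_\nu \preceq d_\xi$ by minimality) plus the fact that $a_\nu \cap d_\nu$ is infinite by the construction of $\code$ in Fact~\ref{f.coding.miller}, which rules out equality; the paper merely phrases this as a proof by contradiction starting from $d_\xi \preceq d_\nu$. Your hedged fallback claim that $a_\nu \cap d_\nu$ is infinite (rather than $d_\nu \subseteq a_\nu$) is the safer one to cite, since it is exactly what Fact~\ref{f.coding.miller} guarantees.
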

\begin{proof}
This is clear by the definition: Suppose otherwise that $d_\xi \preceq d_\nu$. Since $\mathcal A_\nu \subseteq \mathcal A_\xi$, $d_\xi$ is almost disjoint from every set in $\mathcal A_\nu $. So by minimality of $d_\nu$, we infer $d_\nu=d_\xi$. 
But then since $d_\nu \cap a_\nu$ is infinite, $d_\xi$ is not almost disjoint from every element of $\mathcal A_\xi$, contradicting how $d_\xi$ was chosen.
\renewcommand{\qedsymbol}{{\tiny Claim \ref{c.prec}.} $\Box$}
\end{proof} 

 \begin{claim}\label{c.mad}
 The set $\mathcal A$ is a maximal almost disjoint family.
 \end{claim}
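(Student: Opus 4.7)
Since almost disjointness follows directly from the construction (via Fact~\ref{f.coding.miller}), the plan is to establish maximality. Suppose, toward a contradiction, that there exists $b\in[\omega]^\omega$ that is almost disjoint from every element of $\mathcal A$. The strategy is to show that $b$ would have been ``eligible'' to play the role of $d_\xi$ at every stage $\xi<\omega_2$, and then to use Claim~\ref{c.prec} to derive a cardinality contradiction with the fact that $\prec$ has order type $\omega_2$.

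First I would observe that for each $\xi<\omega_2$, since $\mathcal A_\xi \subseteq \mathcal A$, the set $b$ is almost disjoint from every element of $\mathcal A_\xi$; hence $b$ is a candidate in the definition \eqref{e.least} of $d_\xi$. By minimality of $d_\xi$ in the well-order $\prec$, this forces $d_\xi \preceq b$ for all $\xi<\omega_2$.

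Next I would invoke Claim~\ref{c.prec}, which says that the map $\xi \mapsto d_\xi$ is strictly $\prec$-increasing. Combined with the previous observation, this produces $\omega_2$-many distinct elements of $\powerset(\omega)$, namely $\{d_\xi : \xi<\omega_2\}$, all lying in the $\prec$-initial segment $\{y \in \powerset(\omega) : y \preceq b\}$.

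The concluding step is the cardinality contradiction: since $\prec$ is a well-ordering of $\powerset(\omega)$ of order type exactly $\omega_2$ (by Convention~\ref{n.wo} and Theorem~\ref{t.wo}), every proper initial segment has order type strictly less than $\omega_2$, hence cardinality at most $\aleph_1$. In particular, $\{y : y \preceq b\}$ has cardinality at most $\aleph_1$, which is incompatible with containing the $\omega_2$-many distinct sets $d_\xi$. This contradiction shows no such $b$ exists, and $\mathcal A$ is maximal.

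I do not anticipate any serious obstacle here; the main point is that the construction ``exhausts'' all potential witnesses to non-maximality by always choosing the $\prec$-minimal candidate $d_\xi$, and the length $\omega_2$ of $\prec$ is calibrated exactly so that every element of $[\omega]^\omega$ gets picked up before the construction terminates.
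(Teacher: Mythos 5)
Your proof is correct, but it closes the argument differently from the paper. The paper also argues by contradiction from a counterexample $d$, but instead of bounding all the $d_\xi$ by $d$, it uses Claim~\ref{c.prec} to see that $\la d_\xi : \xi<\omega_2\ra$ is $\prec$-cofinal, takes the \emph{least} $\xi$ with $d\preceq d_\xi$, and observes that minimality in \eqref{e.least} then forces $d=d_\xi$; the contradiction is that $a_\xi=\code(\bar c_\xi)$ with $\bar c_\xi(0)=d_\xi$ has infinite intersection with $d$ by Fact~\ref{f.coding.miller}. Your version instead shows $d_\xi\preceq b$ for \emph{every} $\xi$ and derives a cardinality contradiction with the fact that $\prec$-initial segments have size at most $\aleph_1$; the property $a_\xi\cap d_\xi$ infinite enters only implicitly, via your appeal to Claim~\ref{c.prec} (whose proof needs it to rule out $d_\nu=d_\xi$). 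Both arguments rest on the same two pillars --- minimality of $d_\xi$ and strict $\prec$-monotonicity of $\xi\mapsto d_\xi$ --- so the difference is cosmetic rather than structural: the paper's route exhibits the concrete element $a_\xi$ of $\mathcal A$ that meets the putative counterexample infinitely, which is slightly more informative, while yours is a clean pigeonhole count that avoids having to mention cofinality of the $d_\xi$'s. Either is acceptable.
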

 \begin{proof}
Suppose towards a contradiction that $d \in [\omega]^\omega\setminus\mathcal A$ and $\mathcal A\cup\{d\}$ is an almost disjoint family.
Let $\xi < \omega_2$ be the least ordinal such that $d \preceq d_\xi$; such an ordinal exists since $\prec$ well-orders the reals in ordertype $\omega_2$ and so the sequece $\la d_\xi : \xi < \omega_2\ra$ is $\prec$-cofinal in $\powerset(\omega)$. 
But since at stage $\xi$ in the construction of $\mathcal A$, $d_\xi$ was chosen to be the least element almost disjoint from every element of $\{a_\nu : \nu < \xi\}$, we have $d=d_\xi$.
Then since $a_\xi = \code(\bar c_\xi)$ and $\bar c_\xi(0)=d$,  $a_\xi \cap d$ is infinite by the properties of the function $\code$ from Fact~\ref{f.coding.miller}, contradiction.
\renewcommand{\qedsymbol}{{\tiny Claim \ref{c.mad}.} $\Box$}
\end{proof} 

We now show that $\mathcal A$ is $\Pi^1_2(p_{\mathcal A})$.
We first show:
 \begin{claim}\label{c.local.witness.pi}
 There is a $\Pi^1_2(p_{\mathcal A})$ formula $\Theta(x)$ such that $\Theta(\bar c)$ holds if and only if
 $\bar c$ is a local witness.
 \end{claim}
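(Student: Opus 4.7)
The plan is to simply unwind the definitions of \eqref{e.local.witness} and \eqref{e.k+1-order} and verify that the resulting formula is $\Pi^1_2$. Concretely, $\bar c \in \powerset(\omega)^\omega$ is a local witness iff for every $k \in \omega \setminus 3$ and every suitable model $N$ with $\{\bar c \res k,\, \bar c(k+1),\, \vec a_\omega\} \subseteq N$, the displayed $\Sigma_1$ statement
\[
\exists M\ \big(M \text{ transitive, } M \vDash \ZFminus + \Phi_{\text{is}}(c_\prec),\ \{\omega_1,\bar c \res k,\vec a_\omega\}\subseteq M,\ \text{and (a),(b) of } (*)_k\big)
\]
holds in $N$. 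I would therefore let $\Theta(\bar c)$ be precisely this universal statement, quantified over $k$ and $N$.

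First, I would argue that the inner part is arithmetical in the real coding $N$. A countable transitive $\in$-model can be coded by a real giving a well-founded extensional relation on $\omega$; once such a code $n$ is given, the satisfaction relation $N_n \vDash \phi$ for any fixed formula $\phi$ (with parameters from $N_n$) is arithmetical in $n$ together with those parameters. Here the formula is the $\Sigma_1$ statement above, with parameters $\bar c \res k, \vec a_\omega, \omega_1^{N_n}$; note that "$\bar c$ is a minimal $k$-witness" and "$\bar c_\nu \res k$ is a minimal $k$-witness" appear only inside the quoted expression "$M \vDash \cdots$", so they are simply formal formulas of set theory, evaluated inside $M \in N_n$, and thus cause no complexity issue.

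Next, I would treat the outer quantification. The statement "$n$ codes a countable transitive model which is suitable" amounts to: the coded relation is extensional and well-founded (the well-foundedness is $\Pi^1_1(n)$), the coded model satisfies the recursive theory $\ZFminus + $"$\omega_1$ exists", and contains $r$ (arithmetical in $n, r$). So "$n$ codes a suitable model" is $\Pi^1_1(r)$. Consequently,
\[
\forall n\,\big[\text{$n$ codes a suitable $N$ with the required parameters} \;\Longrightarrow\; N_n \vDash \exists M\,\psi_k\big]
\]
is of the form $\forall n\,(\Sigma^1_1 \lor \text{arithmetical})$, i.e.\ $\Pi^1_2$ in the parameters $\bar c \res(k+2), \vec a_\omega, r$, uniformly in $k$. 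A countable conjunction (indexed by $k$) of $\Pi^1_2$ predicates uniformly in $k$ is again $\Pi^1_2$: after moving the quantifier over $k$ outside, $\Theta(\bar c)$ reads
\[
\forall k \geq 3\ \forall n\ \big(n \text{ codes a suitable } N \supseteq \{\bar c\res k,\bar c(k+1),\vec a_\omega\} \;\longrightarrow\; N_n \vDash \exists M\,\psi_k\big),
\]
which is $\Pi^1_2$ with parameters $\vec a_\omega$ and $r$. Since both are computable from $p_{\mathcal A}$ (by choice of $p_{\mathcal A}$ at the beginning of this proof), $\Theta$ is $\Pi^1_2(p_{\mathcal A})$.

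Finally, I would simply observe that $\Theta(\bar c)$ is literally the conjunction defining a local witness in \eqref{e.local.witness}, so $\Theta(\bar c)$ holds iff $\bar c$ is a (minimal) local witness. The only real point requiring care is the uniformity in $k$ of the coding of satisfaction $N \vDash \exists M\,\psi_k$; this poses no genuine obstacle because $\psi_k$ is a single first-order formula of set theory with $k$ as a numerical parameter, but it is the one place I would want to spell out explicitly.
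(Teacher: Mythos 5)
Your proposal is correct and follows essentially the same route as the paper: you show that ``$c$ is a $k$-localizer for $\bar c\res k$'' is $\Pi^1_2$ uniformly in $k$ (a universal quantifier over reals coding suitable models, with a $\Pi^1_1$ hypothesis and an arithmetical matrix), and then observe that the countable conjunction over $k$ remains $\Pi^1_2$. The only cosmetic difference is that the paper handles the uniformity by citing a universal definable $\Pi^1_2$ truth predicate applied to the recursive sequence of formulas $\Theta_k$, whereas you absorb $k$ as a numerical parameter into a single formula --- both devices work, and you rightly flag this uniformity as the one point needing explicit care.
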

 \begin{proof}
Observe that for each $k\in \omega\setminus 3$ the set 
\[
\{(c, c') \in \powerset(\omega)\times\powerset(\omega)^{k} : \text{$c$ is a $k$-localizer for $c'$}\}
\]
is definable by a $\Pi^1_2(p_{\mathcal A})$ formula $\Theta_k(x,y)$.
Since $\la \Theta_k(x,y) : k \in \omega\ra$  is a recursive sequence of formulas, using a universal definable $\Pi^1_2$ truth predicate
we can find a $\Pi^1_2(p_{\mathcal A})$ formula equivalent to 
\[
(\forall k \in\omega) \;\Theta_k(\bar c(k+2),\bar c\res(k+2)).\qedhere
\]
\renewcommand{\qedsymbol}{{\tiny Claim \ref{c.local.witness.pi}.} $\Box$}
\end{proof} 
 Let now $\Psi(a)$ be defined as follows:
 \[
 \Psi(a) \stackrel{\text{def}}{\iff} %a \in [\omega]^\omega \land 
 (\forall \bar c \in (\powerset(\omega)^\omega)\; \bar c = \decode(a) \Rightarrow
 \big( a = \code(\bar c) \land \Theta(\bar c)\big). 
 \]
 Clearly this formula is $\Pi^1_2(p_{\mathcal A})$.
We will show that $\Psi(a) \iff a\in \mathcal A$. The non-trivial direction is ``$\Rightarrow$,'' which we show first.
 \begin{lemma}\label{l.define.A}
$(\forall a\in [\omega]^\omega) \; \Psi(a)\Rightarrow a \in \mathcal A$.
 \end{lemma}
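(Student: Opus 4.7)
Suppose $\Psi(a)$ holds. Setting $c=\decode(a)$, $\bar c=\seqdc{c}$, and $\vec b=\vec b(\bar c)=\la b_\nu : \nu<\xi\ra$, the definition of $\Psi$ gives that $\bar c$ is a minimal local witness and that $a=\code(\bar c(0),\bar c(1),c)$. My strategy is to show that $\vec b=\la a_\mu : \mu<\xi\ra$ is exactly the initial segment of the construction of $\mathcal A$ of length $\xi$. Once this is established, Lemma~\ref{l.unique.local} forces $\bar c$ to agree with the witness $\bar c_\xi$ built at stage $\xi$ of the construction, and then $a=\code(\bar c_\xi(0),\bar c_\xi(1),c_\xi)=a_\xi\in\mathcal A$.

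I plan to prove by transfinite induction on $\xi<\omega_2$ the stronger statement that every minimal local witness coding a sequence of length $\xi$ agrees with $\bar c_\xi$ from the construction. The base case $\xi=\omega$ is immediate: the coherent candidate clause $\vec a_\omega\subseteq\vec b$ leaves no room, so $\vec b=\vec a_\omega$. For the inductive step, fix $\bar c$ and $\vec b$ with $\length(\vec b)=\xi>\omega$. For each $\nu<\xi$, clause (b) of the localizer condition (appearing in the definition of minimal $k$-witness for every $k\geq 3$), together with Lemma~\ref{l.local-global.k} and the absoluteness of the property ``is a minimal $k$-witness'' for transitive models satisfying $\Phi_{\textup{is}}(c_\prec)$, implies that $\bar c_\nu:=\seqdc{\decode(b_\nu)}$ is itself a minimal local witness. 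Denote by $\vec b^{(\nu)}$ the sequence $\bar c_\nu$ codes, of length $\xi^{(\nu)}$. The induction hypothesis, applied to $\bar c_\nu$ (provided $\xi^{(\nu)}<\xi$), gives $\vec b^{(\nu)}=\la a_\mu : \mu<\xi^{(\nu)}\ra$ and $\bar c_\nu=\bar c_{\xi^{(\nu)}}$, whence $b_\nu=\code(\bar c_\nu(0),\bar c_\nu(1),c_\nu)=a_{\xi^{(\nu)}}$. Identifying $\xi^{(\nu)}=\nu$ will rely on Claim~\ref{c.prec}: since $\bar c_\nu(0)=d_{\xi^{(\nu)}}$ and the $d_\mu$ are strictly $\prec$-increasing in $\mu$, the value $\xi^{(\nu)}$ is pinned down by $\bar c_\nu(0)$, and the recursive coherence built into $\bar c$ forces it to coincide with $\nu$.

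The main obstacle is to verify $\xi^{(\nu)}<\xi$, on which the termination of the induction hinges. My plan is to bypass the length issue by refashioning the outer induction so that it is well-founded on the $\prec$-rank of $\bar c(0)$, and to prove $\bar c_\nu(0)\prec\bar c(0)$ directly. Assuming $\vec b^{(\nu)}\subseteq\vec b$ (which ought to follow from the recursive coherent-candidate structure, with $b_\nu$ merely pointing back to earlier coded elements), the element $\bar c(0)$ is automatically a counterexample to the maximality of $\vec b^{(\nu)}$, so by minimality $\bar c_\nu(0)\preceq\bar c(0)$. Moreover, since $b_\nu\cap\bar c(0)$ is finite by the almost-disjointness clause of the $3$-witness condition while $b_\nu\cap\bar c_\nu(0)$ is infinite by Fact~\ref{f.coding.miller}, we must have $\bar c_\nu(0)\neq\bar c(0)$, yielding the strict inequality $\bar c_\nu(0)\prec\bar c(0)$. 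The delicate point here is showing $\vec b^{(\nu)}\subseteq\vec b$ without circularity; I expect this to follow from a secondary inductive argument on $\nu$ within $\vec b$, using the outer inductive hypothesis for smaller initial segments.
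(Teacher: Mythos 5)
There is a genuine gap, and it sits exactly where you flag it. Your descent is supposed to come from $\bar c_\nu(0)\prec\bar c(0)$, and to get even the non-strict inequality $\bar c_\nu(0)\preceq\bar c(0)$ you need $\bar c(0)$ to be almost disjoint from every element of $\vec b^{(\nu)}=\vec b(\bar c_\nu)$, i.e.\ you need the entries of $\vec b^{(\nu)}$ to lie in $\vec b$. Nothing in the definitions gives you this: the coherent-candidate clause only ties $b_\nu$ to $\code$ and $\decode$ of itself, and says nothing about which sequence $\seqdc{\decode(b_\nu)}(2)$ codes, let alone that its entries occur in $\vec b$ or that its length is $\nu$. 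Identifying $\vec b^{(\nu)}$ is precisely what the induction hypothesis is for, and you cannot apply the induction hypothesis to $\bar c_\nu$ before you have the descent --- that is the circularity, and the ``secondary inductive argument on $\nu$'' you hope for does not break it: at stage $\nu$ of the inner induction you know $b_\mu=a_\mu$ for $\mu<\nu$, but you still know nothing about the length or the entries of the sequence coded by $\bar c_\nu$, so you cannot compare $\bar c_\nu(0)$ with $\bar c(0)$.

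The paper avoids this entirely by building the descent into the definition of a $3$-localizer: clause (c) of \ref{e.1-order} demands $c\notin M$ for a transitive $M\models\ZFminus$ with $M\models\Phi_{\text{is}}(c_{\prec})$ containing $\vec b(\bar c)$, so $\powerset(\omega)\cap M$ is a $\prec$-initial segment containing every $b_\nu$ but not $\bar c(3)$; hence $a\preceq b_\nu$ would put $a$, and therefore $\bar c=\seqdc{\decode(a)}$ and $\bar c(3)$, into $M$. This yields $b_\nu\prec a$ with no information about $\vec b(\bar c_\nu)$ whatsoever, and the proof then runs a minimal-counterexample argument on $a$ itself: take $a$ $\prec$-least with $\Psi(a)$ but $\vec b(a)$ not an initial segment of the construction, and apply minimality to $b_\nu$. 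The remark following the definition of $3$-localizer announces exactly this use of clause (c); your proposal never invokes it, and without it the well-foundedness of your recursion is not established. The remaining ingredients of your outline (uniqueness via Lemma~\ref{l.unique.local}, the observation that $b_\nu\cap\bar c_\nu(0)$ is infinite while $b_\nu\cap\bar c(0)$ is finite, the use of Claim~\ref{c.prec}) are consistent with the paper and would be fine once the descent is repaired along these lines.
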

 \begin{proof}
For each $a$ such that $\Psi(a)$, we know that $\bar c = \decode(a)$ is a local witness, and so 
$\vec b(\bar c)$ is defined, namely as the unique sequence coded by $\bar c(1)$ as in \ref{e.0-order} (b).
For each such $a$ let us write $\vec b(a)$ for this sequence.
 We need the following claim:
  \begin{claim}\label{c.init}
For any $a \in [\omega]^\omega$ such that $\Psi(a)$ holds, $\vec b(a)$ 
is an initial segment of $\la a_\nu : \nu < \omega_2\ra$.
 \end{claim}
 \begin{proof}
Towards a contradiction, let
$a \in \powerset(\omega)$ be $\prec$-least such that $\Psi(a)$ but $\vec b(a)$ 
is \emph{not} an initial segment of $\la a_\nu : \nu < \omega_2\ra$. 

Write $\bar c=\decode(a)$ and suppose $\vec b(a)=\la b_\nu : \nu<\xi\ra$. 
Let $\nu < \xi$ be least such that $b_\nu \neq a_\nu$.
Recall that by Lemma~\ref{l.local-global}, $\bar c(k)$ is a $k$-localizer for $c\res (k)$ for each $k\in\omega\setminus 3$.
%Thus $\bar \nu < \xi$ and for each $\mu < \bar \nu$, $b_\mu = a_\mu$; we show $b_\nu = a_\nu$.

In particular $\bar c(3)$ is a $3$-localizer for $c\res 3$.
Then by (b) in \ref{e.1-order} and by the proof of Lemma~\ref{l.local-global}, letting $\bar c^*_\nu=\decode(b_\nu)$ it holds that $\bar c^*_\nu \res 3$ is a minimal witness.
%Therefore $\bar c^*_\nu(0)$ is the $\prec$-least element of $[\omega]^\omega$ which is almost disjoint from 
More generally, since $\bar c(k)$ is a $k$-localizer for $c\res (k+2)$, by (b) in \ref{e.k+1-order}
we see that $\bar c^*_\nu \res k$ is a minimal witness.
Since this holds for each $k\in\omega$, $\bar c^*_\nu$ is a local witness; and since $b_\nu = \code(\bar c^*_\nu)$ we infer $\Psi(b_\nu)$ holds.
 \begin{subclaim*}%\label{c.label}
It holds that $b_\nu \prec a$.
 \end{subclaim*}
 \begin{proof}
By (c) of \ref{e.1-order} there is a model $M$ of $\ZFminus$ with $\{b_\nu,\bar c\res 3\}\subseteq M$ such that $M\cap\powerset(\omega)$ is an initial segment of $\prec$, and moreover $\bar c(3) \notin M$. 
So $a \preceq b_\nu$ leads to a contradiction, since then $\bar c= \decode(a) \in M$ and so $\bar c(3)\in M$. 
\renewcommand{\qedsymbol}{{\tiny Subclaim.} $\Box$}
\end{proof} 
Thus by minimality of $a$, $\vec b(b_\nu)$ is an initial segment of $\la a_\xi : \xi < \omega_2\ra$.
In fact, since $\bar c \res 3$ is a $3$-witness, (b) in \ref{e.0-order} tells us that $\vec b (b_\nu)$ is a sequence of length $\nu$.
We conclude $\vec b (b_\nu) = \la a_\xi : \xi < \nu\ra$.
But then since $\bar c^*_\nu$ and $\bar c_\nu$ are both local witnesses for the sequence $\la a_\xi : \xi < \nu\ra$, we must have $\bar c^*_\nu=\bar c_\nu$ by Lemma~\ref{l.unique.local}.
If follows that $a_\nu = \code(\bar c_\nu)= \code (\bar c^*_\nu) = b_\nu$, contradiction.
\renewcommand{\qedsymbol}{{\tiny Claim \ref{c.local.witness.pi}.} $\Box$}
\end{proof} 
By the claim, if $\Psi(a)$ holds, we can fix $\nu<\omega_2$ such that $\vec b(a) = \la a_\xi : \xi < \nu\ra$.
Then by uniqueness of the minimal local witnesses (again Lemma~\ref{l.unique.local}) for $\la a_\xi : \xi < \nu\ra$, $\decode(a)=\bar c=\bar c_\nu$
and $a=\code(\bar c_\nu)=a_\nu$.
\renewcommand{\qedsymbol}{{\tiny Lemma \ref{l.define.A}.} $\Box$}
\end{proof} 
Finally, it is clear by construction that for any $\xi < \omega_2$, $a_\xi = \code(\bar c_\xi)$ and $\bar c_\xi$ is a local witness.
 Therefore $\Psi(a_\xi)$ holds. So $a\in \mathcal A\Rightarrow \Psi(a)$. 
\renewcommand{\qedsymbol}{{\tiny Theorems~\ref{t.main}\&~\ref{t.better}.} $\Box$}
\end{proof}

\section{Questions}

Finally, the proof we give is obviously more widely applicable, e.g., for other infinite combinatorial objects such as maximal independent families.
The authors intend to find an axiomatization of the objects to which it will be applicable, perhaps in the style of Z.\ Vidnyanski's axiomatization of Miller's procedure.

In \cite{004S5}, Schindler showed the consistency of $\BPFA +$``Every projective set of reals is Lebesgue-measurable'', assuming that there is a $\Sigma_2$-correct regular cardinal above a remarkable cardinal. In light of this and Corollary~\ref{c.main} the following is a natural question:

\begin{question}
Can one prove the consistency of $\BPFA + \text{``All } \mathbf{\Pi^1_2} \text{ mad families are finite."}$ from a $\Sigma_2$-correct regular cardinal above a remarkable cardinal? 
\end{question}

\begin{question}
Can $\BPFA$ be replaced by the Bounded Forcing Axiom for Axiom A in Theorem~\ref{t.main}?
\end{question}

\begin{question}
Can the anti-large cardinal assumption be weakened? 
Can we assume a forcing axiom \emph{stronger} than $\BPFA$, but still compatible with such an assumption, and derive a form of Theorem~\ref{t.main}?
\end{question}

\bibliography{definable-mad-families-BPFA}{}
\bibliographystyle{amsplain}

\end{document}

\section{Remarkability \& Stuff}

Let $E \dfeq \soast{2\alpha}{\alpha \in \On}$ denote the class of all even ordinals.

\begin{definition}
If $B \subset \aleph_1 \cap E$, then the \emph{reshaping forcing} for $B$ is denoted by $\PP_B$ and given by
\begin{align*}
\PP_B\dfeq \soast{p}{p \in [\omega_1]^{<\omega_1} \wedge p \cap E = B \cap \sup_{\alpha \in p}(\alpha + 1) \wedge \forall \xi < \sup_{\alpha \in p}(\alpha + 1) : \believes{L}{\xi \text{ is countable.}}}.
\end{align*}
\end{definition}

In \cite{000B0}, Bagaria proved the following theorem:

\begin{theorem}
\label{theorem : FA via generic absoluteness}
Let $\mathbb{P}$ be a partial ordering and $\kappa$ an infinite cardinal of uncountable cofinality. Then, the following are equivalent:
\begin{enumerate}
\item $\BFA_\kappa(\mathbb{P})$.
\item $\Sigma_1(\powerset(\kappa))$-absoluteness for $\mathbb{P}$
\item $\Sigma_1(H(\kappa^+))$-absoluteness for $\mathbb{P}$
\end{enumerate}
\end{theorem}

The following large cardinal notion is due to Schindler, cf. \cite[Definition 1.4]{000S0}.

\begin{definition}
A cardinal $\kappa$ is called \emph{remarkable} if
\end{definition}

In \cite{018GS1}, Gitman and Schindler introduced the concept of \emph{virtual large cardinals}. Already in \cite{971M0}, Magidor had proved the following theorem which we now somewhat anachronistically:

\todo[inline]{This holds modulo a gentle (and allegedly equivalent) modification of Ralf's original definition given in \cite{018GS1}.}

\begin{theorem}
\label{theorem : Magidor}
A cardinal is remarkable\ifff it is virtually supercompact.
\end{theorem}

The following notion goes back to Baumgartner, cf. \cite[Section 7]{983B0}.

\begin{definition}
\label{definition : Axiom A}
A notion $\opair{\PP}{\leqslant}$ of forcing satisfies \emph{Axiom A} if there is a sequence $\seq{\leqslant_n}{n < \omega}$ of partial orderings on $\PP$ such that
\begin{enumerate}
\item $\leqslant_0 = \leqslant$,\label{Axiom A : first condition}
\item $\leqslant_{n + 1} \subset \leqslant_n$ for all natural numbers $n$,\label{Axiom A : second condition}
\item for all natural numbers $n$, all conditions $p \in \PP$ and all antichains $A \subset \PP$ there is a condition $q \leqslant_n p$ such that $\soast{a \in A}{a \comp q}$ is countable,\label{Axiom A : third condition}
\item for all sequences $\seq{p_n}{n < \omega}$ of conditions in $\PP$ such that $p_{n + 1} \leqslant_n p_n$ for all natural numbers $n$, there is a condition $p \in P$ with $p \leqslant_n p_n$ for all natural numbers $n$.\label{Axiom A : fourth condition}
\end{enumerate}
\end{definition}

\begin{lemma}
 Let $\kappa \dfeq \aleph_1%^V
$ and $\lambda > \kappa$. Furthermore, for all positive natural numbers $n$ and all sets $c$ of countable even ordinals, let $q \leqslant^c_n p \ifff q \leqslant^c p$ and there are uncountably many countable ordinals $\alpha$ such that $L_\alpha[q \cap E] \prec_{\Delta_n} L_{\omega_2}[c]$.  Note that this in particular means $q \leqslant^c_0 p \ifff q \leqslant^c p$.

If $\believes{L}{\kappa \text{ is not virtually } \lambda\text{-supercompact.}}$, then
\begin{align*}
\believes{V^{\Coll(\omega_1, \lambda)}}{\text{If } c \subset E \text{ codes a well-order of type } \lambda \text{, then reshaping forcing for } c \text{ satisfies Axiom A via the partial orderings } \leqslant^c_n.}
\end{align*}
\end{lemma}

\begin{proof}
Assume that $\believes{L}{\kappa \text{ is not virtually supercompact.}}$. So there is an ordinal $\lambda$ such that $\believes{L}{\kappa \text{ is not virtually $\lambda$-supercompact.}}$. Working in $V^{\Coll(\omega_1, \lambda)}$, let $c$ be a set of countable even ordinals coding a well-order of type $\lambda$.

That the partial orders defined above conform to \eqref{Axiom A : first condition} and \eqref{Axiom A : second condition} of Definition \ref{definition : Axiom A} is immediate. In order to check \eqref{Axiom A : third condition} of Definition \ref{definition : Axiom A}, let a condition $p \in \PP_c$, an antichain $A \subset \PP_c$, a natural number $n$ and a countable ordinal $\beta$ be arbitrarily chosen. Let $r \leqslant p$ be such that $\card{\soast{a \in A}{a \comp r}} < 2$. Now let $M \supset L_\beta[r \cap E]$ be a $\Delta_n$-elementary submodel of $L_{\omega_2}[c]$. By the condensation lemma for relative constructibility, cf. \cite[Chapter II, Section 7, Exercise 4A]{984D1}, \cite[Lemma 13.24]{003J0}, \cite[Theorem 1.16]{010SZ0} and \cite[Theorem 5.29]{014S2}
\todo[inline]{Is there a better source?}
$M$'s Mostowski-collapse is $L_\alpha[c \cap \gamma]$ for suitable countable ordinals $\alpha$ and $\gamma$. Clearly there is a $q \leqslant r$ with domain $\gamma$ such that $q \cap E = c \cap \gamma$. So  we have $L_\alpha[q \cap E] \prec_{\Sigma_n} L_{\omega_2}[c]$, i.e. $q \leqslant_n p$ and $\soast{a \in A}{a \comp q}$ has at most one member, in particular it is countable.
Now assume towards a contradiction that the partial orderings defined above violate \eqref{Axiom A : fourth condition} of Definition \ref{definition : Axiom A}. So there is a sequence $p_0 \geqslant^c_0 p_1 \geqslant^c_1 p_2 \geqslant^c_2\dots$ without a $p \in \PP_c$ satisfying $p \leqslant^c_n p_n$ for all natural numbers $n$. This relations between the elements of the sequence are witnessed by countable ordinals $\alpha_n$ for natural numbers $n$ such that $L_{\alpha_n}[p_n \cap E] \prec_{\Delta_n} L_{\alpha_{n + 1}}[p_{n + 1} \cap E]$. We may assume \wlg\ that $\alpha_n > \sup(p_n)$ for all natural numbers $n$. Consider  $p \dfeq \bigcup_{n < \omega} p_n$ and $\alpha \dfeq \sup_{n < \omega} \alpha_n$.
\begin{claim}
$L_\alpha [p \cap E] \prec L_{\omega_2}[c]$.
\end{claim}
\begin{proof}
First note that
\begin{align*}
\bigcup_{n < \omega} L_{\alpha_n}[p_n \cap E] = \bigcup_{n < \omega} L_\alpha[p_n \cap E] = L_\alpha[p \cap E].
\end{align*}
That the $\subset$-relation holds is obvious in both cases. In order to show that $L_\alpha[p \cap E] \subset \bigcup_{n < \omega} L_{\alpha_n} [p_n \cap E]$, assume that there were an $x \in L_\alpha[p \cap E] \setminus \bigcup_{n < \omega} L_{\alpha_n}[p_n \cap E]$. So there is a natural number $k$ such that $x \in L_{\alpha_k}[p \cap E] \setminus \bigcup_{n < \omega} L_{\alpha_n}[p_n \cap E]$.
We can prove this by induction over the quatifier complexity of formul\ae.
\todo[inline]{Is this true? If so, then we continue as follows\dots}
\end{proof}
\end{proof}

The last author considered $\BAAFA$ in \cite{010W0}.

\begin{theorem}
If $\BAAFA$ and $\aleph_1$ is inaccessible to the reals, then $\aleph_1$ is remarkable in $L$.
\end{theorem}

\todo[inline]{It is unclear whether or not Axiom A here might be replaced by some iteration of $\sigma$-closed and $\ccc$. This is interesting as this is to Thilo's knowledge open in general. If $\aleph_1$ is not weakly compact in $L$, specialising all Aronszajn trees (in $L$) should render $\aleph_1$ not Mahlo in $L[\text{something}]$ and thereby make reshaping forcing have a countably closed dense subset The something should better have size $\aleph_1$ but even if not the presence of $\BAAFA$ should help here.}

\begin{proof}
We assume towards a contradiction that $\BAAFA$ and that $\aleph_1$ is inaccessible to the reals yet unremarkable in $L$. By Theorem \ref{theorem : Magidor}, this means that $\aleph_1$ is not virtually supercompact. So there is a $\lambda \omega_1$ such that $\aleph_1$ is not $\lambda$-supercompact. We are now going to point out a generic extension $V[G]$ by an Axiom-A forcing containing a real $r$ such that $\believes{V[G]}{\aleph_1 = \aleph_1^{L[r]}}$.

But as $\exists r \in \mathbb{R} : \aleph_1 = \aleph_1^{L[r]}$ is a $\Sigma_1$-statement with parameters in $H_{\aleph_2}$
\todo[inline]{Elaborate!}
, this contradicts Theorem \ref{theorem : FA via generic absoluteness}, a contradiction!
\end{proof}

\section{Proof of Theorem~\ref{t.noMAD}}

\todo[inline]{An absoluteness argument using Mathias forcing and the tilde operator.}

 \begin{claim}\label{c.label}
 
 \end{claim}
 \begin{proof}
 
\renewcommand{\qedsymbol}{{\tiny Claim \ref{c.label}.} $\Box$}
\end{proof}

%\printbibliography
\bibliography{definable-mad-families-BPFA}{}
\bibliographystyle{amsplain}

 \end{document}

 \begin{lemma}[A form of localization]\label{l.witnessing}
Suppose $\MA_{\aleph_1}$ holds (and recall that we are working under the assumption that $\omega_1 = (\omega_1)^{\eL[r]}$ made in~\ref{n.r}).
Let $\phi(x,P)$ be a $\Delta_0$ 
formula with parameter $P\subseteq \omega_1$ such that $\Hhier(\omega_2) \vDash (\exists x)\; \phi(x,P)$. Suppose further that $p\subseteq \omega$ almost disjointly via $\mathcal F$ codes $P$.
Then there is $c\subseteq \omega$ such that %$c$ almost disjointly via $\mathcal F$ codes some $x$ such that $\phi(x,P^*)$ and moreover, 
\begin{eqpar}
for any suitable model $N$, if $\{c,p\}\subseteq N$ then the following holds in $N$:
Letting $P^*\subseteq\omega_1$ be the set almost disjointly coded via $\mathcal F$ by $p$,  $c$ almost disjointly via $\mathcal F$ codes some $x^*$ such that $\phi(x^*,P^*)$.
\end{eqpar}
\end{lemma}